\documentclass[a4paper,12pt]{article}

\usepackage{graphicx}
\usepackage{multicol,multirow}
\usepackage{amsmath,amssymb,amsfonts}
\usepackage{mathrsfs}
\usepackage{amsthm}
\usepackage{rotating}
\usepackage{appendix}
\usepackage[numbers]{natbib}
\usepackage{ifpdf}
\usepackage{xcolor}
\usepackage[colorlinks,allcolors=blue]{hyperref}
\usepackage{ulem}
\usepackage{lipsum}
\usepackage[top=1in, bottom=1in, left=0.5in, right=0.5in]{geometry}
\usepackage{microtype}

\usepackage{mathtools} 
 \usepackage{amsmath} 

\usepackage{xcolor}

\newtheorem{theorem}{Theorem}[section]
\newtheorem{lemma}[theorem]{Lemma}
\newtheorem{corollary}[theorem]{Corollary}

\newtheorem{conjecture}[theorem]{Conjecture}

\newtheorem{definition}[theorem]{Definition}

\theoremstyle{cupremark}
\newtheorem{remark}[theorem]{Remark}

\renewenvironment{proof}{{\bfseries Proof.} }{\qed}

\newtheorem*{theorem-rep}{\theoremname}  
\newtheorem*{corollary*}{\theoremname}
\newtheorem*{lemma*}{Lemma}

\theoremstyle{cupproof}

\numberwithin{equation}{section}
\title{Seifert fibered 3-manifolds and Turaev-Viro invariants volume conjecture}
\author{Shashini Marasinghe}
\date{}

\raggedbottom

\begin{document}

\maketitle

\abstract{We study the large $r$ asymptotic behaviour of the Turaev-Viro invariants of oriented Seifert fibered 3-manifolds at the root $q=e^\frac{2\pi i}{r}$. As an application, we prove the volume conjecture for large families of oriented Seifert fibered 3-manifolds with empty or non-empty boundary.}

\section{Introduction}

 \quad \quad In 1988, Witten \cite{witten1989quantum} proposed an extension of the Jones polynomial of links in 3-sphere to arbitrary 3-manifolds, and conjectured a connection between quantum invariants and classical invariants of links and 3-manifolds. This approach led to the discovery of a family of quantum invariants for closed-oriented 3-manifolds with an embedded colored link. Subsequently, Reshetikhin and Turaev \cite{reshetikhin1991invariants} constructed 3-manifold and link invariants that satisfy the topological quantum field theory (TQFT) properties expected by Witten's theory. In this paper, we are concerned with the $SU(2)$-Witten-Reshetikhin-Turaev invariants. For any 3-manifold $M$, any integer $r\geq3$, and a primitive $4r$-th root of unity $e^\frac{\pi i}{2r}$, this theory produces a complex-valued invariant $RT_r(M,e^\frac{\pi i}{2r})$. For Seifert fibered 3-manifolds with fibering over orientable 2-orbifold base, the $RT_r(M,e^\frac{\pi i}{2r})$ invariants were computed in \cite{lawrence1999witten}. For the general case, orientable or non-orientable 2-orbifold base, see \cite{hansen2001reshetikhin}. 

The Turaev-Viro invariants \cite{turaev1992state} were introduced in the early 1990s as a state sum for a triangulation of 3-manifolds, offering a combinatorial framework for computing 3-manifold invariants. Turaev-Viro invariants are indexed by integers $r$, which depend on the choice of a root of unity. In this paper, we focus on $SO(3)$-Turaev-Viro invariants. For any 3-manifold $M$, odd integers $r\geq3$, and $2r$-th root of unity $q=e^\frac{2\pi i}{r}$, these invariants are denoted by $TV_r'(M,q)$. Moreover, Turaev-Viro invariants are real-valued and equal to the squared modulus of the Witten-Reshetikhin-Turaev invariants under suitable conditions \cite{robertproof}.

This paper aims to prove that some families of oriented Seifert fibered 3-manifolds with orientable or non-orientable 2-orbifold base satisfy the generalized Turaev-Viro invariants volume conjecture stated by Detcherry-Kalfagianni in \cite{detcherry2020gromov}. This conjecture is the 3-manifold analogue of the volume conjecture proposed by Kashaev in \cite{kashaev1997hyperbolic}, and later by Murakami-Murakami in \cite{murakami2001colored}. In \cite{chen2018volume}, Chen-Yang stated the version of this conjecture for hyperbolic 3-manifolds. The Chen-Yang conjecture has been proved for several classes of hyperbolic 3-manifolds. This includes families of link complements in the 3-sphere \cite{detcherry2018turaev,Ku}, and closed 3-manifolds obtained by Dehn filling along such links \cite{Oh,WongYang,CZ,GMWY}, and link complements in other 3-manifolds \cite{belletti2022growth,Bel}.
Partial verifications of the generalized  Tuarev-Viro invariants volume conjecture were given in \cite{detcherry2020gromov,Wong,KuM1,KuM2}.
For Seifert fibered 3-manifolds, Detcherry-Kalfagianni \cite{detcherry2020gromov} proved that the Tuarev-Viro invariants $TV_r'(M,q)$ grow at most polynomially with respect to $r$.
    In this paper, we prove that for large families of Seifert fibered 3-manifolds with empty or non-empty boundary, the growth of $TV_r'(M,q)$, with respect to odd integer $r\geq3$, is bounded below by 1, hence verifying the generalized volume conjecture for these classes of 3-manifolds.

Notice that throughout this paper, we focus on odd integer $r\geq 3$ and $q$ that is  $2r$-th root of unity with $q=e^\frac{2\pi i}{r}$, unless mentioned otherwise. Here, $TV_r$ denotes the $SU(2)$ version of the Turaev-Viro invariants, 
while $TV_r'$ denotes the $SO(3)$ version. Similarly, $RT_r$ represents 
the $SU(2)$ version of the Witten-Reshetikhin-Turaev invariants, 
and $RT_r'$ represents the $SO(3)$ version. We will use the following definition to state the Turaev-Viro invariants volume conjecture.   

\begin{definition}[Section 1.1 \cite{detcherry2020gromov}]
For a compact oriented 3-manifold $M$, let $TV_r'(M,q)$ denote the $r$-th Turaev-Viro invariant of M at $q=e^\frac{2\pi i}{r}$. Then, the Turaev-Viro growth rate is 
    \[LTV(M)=\underset{r \xrightarrow{}\infty}{\text{lim sup }}\frac{2\pi}{r}\text{log}|TV_r'(M,q)|,\]

    where $r$ runs over all odd positive integers.
    
\end{definition}

Recall that the notation $||M||$ denotes the Gromov norm (a.k.a. simplicial volume) of an oriented 3-manifold $M$. Now, we state the volume conjecture.

\begin{conjecture}[Turaev-Viro invariants volume conjecture 8.1 \cite{detcherry2020gromov}]
\label{con:TV volume}
 For every compact orientable 3-manifold $M$ with an empty or toroidal boundary, we have 
 \[LTV(M)=\underset{r \xrightarrow{}\infty}{\text{lim sup }}\frac{2\pi}{r}\text{log}|TV_r'(M,q)|=v_3||M||,\]
 where $r$ runs over all odd positive integers and $v_3\simeq 1.0149$ is the volume of a regular ideal tetrahedron.
     
 \end{conjecture}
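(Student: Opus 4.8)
The conjecture is open in general, so I treat the Seifert fibered case handled in this paper, where the key simplification is on the geometric side. Every Seifert fibered 3-manifold $M$ with orientable or non-orientable base and with empty or toroidal boundary is a graph manifold whose geometric (JSJ) decomposition contains no hyperbolic piece; by the vanishing results of Gromov and Thurston for simplicial volume this forces $\|M\|=0$, hence $v_3\|M\|=0$. The conjecture therefore reduces to the single analytic statement
\[
LTV(M)=\underset{r\to\infty}{\text{lim sup }}\frac{2\pi}{r}\log|TV_r(M,q)|=0 .
\]
It is important to note that this is genuinely a two-sided claim. Since $TV_r(M,q)=|RT_r(M,q')|^2$ under the hypotheses of \cite{robertproof}, a polynomial upper bound on $|RT_r|$ (as already obtained by Detcherry--Kalfagianni \cite{detcherry2020gromov}) only gives $LTV(M)\le 0$; to get equality one must also rule out exponential decay of $TV_r$, which is exactly the assertion that its growth is \emph{exactly} polynomial.

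The plan is to feed the explicit formulas for the Witten--Reshetikhin--Turaev invariants of Seifert fibered spaces into an elementary asymptotic analysis. For fibrations over an orientable base I would use the closed formula of Lawrence--Rozansky \cite{lawrence1999witten}, and for the general case, including non-orientable base, the formula of Hansen--Takata \cite{hansen2001reshetikhin}. In each case $RT_r(M,q')$ is a finite sum over admissible colors $n\in\{1,\dots,r-1\}$ of a product of factors built from quantum integers $[n]=\frac{\sin(2\pi n/r)}{\sin(2\pi/r)}$, the $S$- and $T$-matrix entries of the modular data, and Gauss sums attached to the exceptional fibers and the Euler number. For the upper bound I would estimate term by term: each quantum integer satisfies $|[n]|\le \frac{1}{|\sin(2\pi/r)|}=O(r)$, so any product of a fixed number of them---the number being controlled by the exceptional fiber orders and the genus, all independent of $r$---is polynomial; the Gauss sums have modulus of order $\sqrt{r}$, and the coloring sum has $O(r)$ terms, giving $|RT_r|=O(r^{N})$ for an exponent $N$ depending only on the Seifert data.

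The substantial part is the matching lower bound, and here the main obstacle is that the triangle inequality is useless: one must show that the oscillatory coloring sum does not suffer near-total cancellation. The strategy I would pursue is to evaluate rather than merely bound the Gauss-sum factors, using Gauss-sum reciprocity and a stationary-phase/lattice-point argument to isolate a dominant contribution to the sum whose modulus is bounded below by $c\,r^{-m}$ along a suitable subsequence of odd $r$. Combined with the upper bound this yields $\frac{2\pi}{r}\log|TV_r|=O\!\left(\frac{\log r}{r}\right)\to 0$ from above and $\ge -O\!\left(\frac{\log r}{r}\right)\to 0$ along the subsequence, hence $LTV(M)=0=v_3\|M\|$.

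Finally I would organize the argument so that the toroidal-boundary case and the non-orientable base are handled within the same framework: a toroidal boundary introduces one free color per boundary torus, inflating the exponent $N$ by a bounded amount but leaving the growth polynomial, while a non-orientable base modifies the $S$-matrix bookkeeping in the Hansen--Takata formula without changing the size estimates. I expect the delicate bookkeeping to lie in tracking which Seifert invariants (number and orders of exceptional fibers, Euler number, base genus and orientability) are compatible with a non-vanishing dominant term, since these are precisely the hypotheses that will carve out the ``large families'' for which the conjecture is established.
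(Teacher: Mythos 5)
Your framing is right: $\|M\|=0$ for every Seifert fibered space, the upper bound $LTV(M)\le 0$ is already Theorem 6.2 of \cite{detcherry2020gromov}, and the entire content is a lower bound ruling out cancellation in the coloring sum. But the step where you propose to ``evaluate the Gauss-sum factors by reciprocity and a stationary-phase argument to isolate a dominant contribution'' is exactly where the difficulty lives, and it is not how the paper proceeds --- nor could any such argument succeed for \emph{all} Seifert fibered manifolds, since the paper only establishes the conjecture under an explicit arithmetic hypothesis: there exist $\gamma>0$ and $\mu_j=\pm 1$ with $\gamma+\mu_j b_j^*\equiv 0 \pmod{a_j}$ for all $j$. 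You correctly anticipate at the end that some condition on the Seifert data will be needed, but you do not identify the mechanism, and the stationary-phase route applied to $M$ itself would have to contend with the quadratic phase $e^{i\pi e(M)\gamma^2/2r}$ coming from a nonzero Euler number --- a genuine Gauss sum with no obvious nonvanishing dominant term. As written, the lower bound is not established.

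The paper's actual mechanism is more elementary and avoids all oscillatory analysis. First, it works with the double $D(M)$ (recovering $M$ with boundary via $TV_r(M,q)=\eta^{-\chi(M)}RT_r(D(M),e^{\pi i/r})$, and $D(M)$ itself via $TV_r=\|RT_r\|^2$): since $e(D(M))=0$, the quadratic phase in $\gamma$ disappears entirely. Second, it restricts to the subsequence $r=kA$ with $A=\mathrm{lcm}(a_1,\dots,a_n)$, which trivializes the factors $e^{-2\pi i r m_j^2 b_j^*/a_j}$ and turns each inner sum over $m_j\in\mathbb{Z}/a_j\mathbb{Z}$ into an exact orthogonality relation, equal to $a_j$ when $\gamma+\mu_j b_j^*\equiv 0\pmod{a_j}$ and $0$ otherwise; this is precisely where the congruence hypothesis enters, guaranteeing at least one surviving term. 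Third, after this collapse every surviving summand equals $(-1)^n/\sin^{2n+2a_\epsilon g-2}(\pi\gamma/r)$, so all terms share one sign and each has modulus at least $1$, giving $|Z|\ge 2|\mathcal{B}|k\left(\prod_j a_j\right)^2$ with no cancellation to control; hence $|TV_{kA}|>1$ and $LTV\ge 0$ along the subsequence. If you want to salvage your outline, replace the stationary-phase step with this doubling-plus-divisibility argument and state the congruence condition as a standing hypothesis rather than an afterthought.
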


Let $M$ be an oriented Seifert fibered 3-manifold with empty or non-empty boundary that fibers over a genus $g$ 2-orbifold base with the symbol $(\epsilon,g; (a_1, b_1), \ldots, (a_n, b_n))$. The invariant pair $(a_j,b_j)$ represents the $j$-th exceptional fiber. Recall that, $ n \geq 0$, and $g>0$ are integers, and for each $j = 1, \ldots, n$, $a_j \geq 1$ and $b_j$ are coprime integers. For an orientable 2-orbifold base, denote $\epsilon=O$; for a non-orientable 2-orbifold base, denote $\epsilon =N$. The genus for the non-orientable 2-orbifold base is the number of $\mathbb{R}P^2$ connected sums. Also, $b_j^*$ denotes congruence inverse of $b_j$ modulo $a_j$; that is $b_j b_j^* \equiv 1 (\text{mod }a_j)$. Additionally, see Section \ref{section 3} for a detailed explanation of the construction of a closed-oriented Seifert fibered 3-manifold \( D(M) \).

   Throughout this paper, all computations are carried out under the assumptions that $a_j\geq3$ and gcd$(a_j,b_j)=1$ for all $j=1\ldots,n$. Now, we are ready to state the main theorem of this paper.

\begin{theorem}
\label{main.A}
    Let $M$ be an oriented Seifert fibered 3-manifold with boundary, described by the symbol 
\[
(\epsilon,g; (a_1, b_1), \ldots, (a_n, b_n)),
\] where each $a_j\geq3$ and gcd$(a_j,b_j)=1$ for all $j=1\ldots,n$. Suppose that there is an integer $\gamma > 0$ and $\boldsymbol{\mu}=(\mu_1,\ldots ,\mu_n)$ with $\mu_j=\pm 1$ such that \[\gamma+\mu_jb_j^* \equiv 0 (\text{mod }a_j)\] for $j=1,\ldots,n$.
Then, $M$ and $D(M)$ satisfy Conjecture \ref{con:TV volume}.

That is, $LTV(M)=LTV(D(M))=0$. \end{theorem}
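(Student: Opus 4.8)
The plan is to deduce Conjecture \ref{con:TV volume} for both $M$ and $D(M)$ from the single equality $LTV(M)=0$, which I would in turn obtain by squeezing $\frac{2\pi}{r}\log|TV_r(M,q)|$ between a vanishing upper bound and a vanishing lower bound. Two structural reductions come first. Since $M$ and $D(M)$ are Seifert fibered they are graph manifolds, so their Gromov norms vanish, $\|M\|=\|D(M)\|=0$; hence the right-hand side $v_3\|M\|$ of the conjecture is $0$ for each, and the conjecture is equivalent to the vanishing of the corresponding $LTV$. Moreover, as $D(M)$ is the closed manifold obtained by doubling $M$ along its toroidal boundary, the gluing property of the Turaev--Viro TQFT gives $TV_r(M,q)=RT_r(D(M),q')$, and since $D(M)$ is closed, $TV_r(D(M),q)=|RT_r(D(M),q')|^2=TV_r(M,q)^2$. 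Taking logarithms, $LTV(D(M))=2\,LTV(M)$, so it suffices to prove $LTV(M)=0$; the statement for $D(M)$ then follows immediately.

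For the upper bound $LTV(M)\le 0$ I would invoke the at-most-polynomial growth of $TV_r$ for Seifert fibered spaces established by Detcherry--Kalfagianni (equivalently, read it off the explicit evaluation of $RT_r(D(M),q')$): a bound $|TV_r(M,q)|\le C r^{d}$ gives $\frac{2\pi}{r}\log|TV_r(M,q)|\le\frac{2\pi}{r}(\log C+d\log r)\to 0$, so the $\limsup$ is nonpositive.

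The substance is the matching lower bound $LTV(M)\ge 0$, and this is where the arithmetic hypothesis is used. Writing $RT_r(D(M),q')=TV_r(M,q)$ through the Lawrence--Rozansky/Hansen evaluation as a finite sum over colors, each summand factors as a product over the exceptional fibers of reciprocity/Gauss-sum contributions $G_j$ attached to the pairs $(a_j,b_j)$. I would single out the summand indexed by the given $\gamma$: the congruences $\gamma+\mu_jb_j^*\equiv 0\ (\mathrm{mod}\ a_j)$ are precisely the resonance conditions ensuring that, at this one common color $\gamma$, every factor $G_j$ sits at a non-degenerate stationary point and is bounded below in modulus, so that the associated summand has only polynomial size in $r$ rather than being exponentially small. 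The main obstacle is to rule out destructive interference: I would split the sum into this resonant term and a remainder and show that the remainder cannot cancel the resonant term to exponential order, producing a lower bound $|TV_r(M,q)|\ge c\,r^{-e}$ for constants $c>0$, $e\ge 0$ as $r$ ranges over the odd integers. This yields $\frac{2\pi}{r}\log|TV_r(M,q)|\ge\frac{2\pi}{r}(\log c-e\log r)\to 0$, so $\liminf\ge 0$ and hence $\limsup\ge 0$.

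Combining the two bounds gives $LTV(M)=0$, and the doubling identity then forces $LTV(D(M))=0$, proving the conjecture for both manifolds. I expect the genuinely difficult part to be the non-cancellation estimate underlying the lower bound: controlling the phases of the non-resonant colors precisely enough to guarantee that the resonant contribution is not annihilated requires a careful stationary-phase and Gauss-sum analysis, and treating the non-orientable base case ($\epsilon=n$) on the same footing as the orientable one may require a separate analysis of the fiber contributions.
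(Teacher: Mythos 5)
Your overall architecture matches the paper's: Gromov norm zero reduces the conjecture to $LTV=0$, the doubling identities $TV_r(M,q)=\eta^{-\chi(M)}RT_r(D(M),e^{\pi i/r})$ and $TV_r(D(M),q)=|RT_r(D(M),e^{\pi i/r})|^2$ tie the two manifolds together, the upper bound is Detcherry--Kalfagianni's polynomial bound, and the lower bound is extracted from the Hansen evaluation of $RT_r(D(M))$ using the congruences $\gamma+\mu_j b_j^*\equiv 0 \pmod{a_j}$. But the step you yourself flag as ``the genuinely difficult part'' --- ruling out destructive interference between the resonant summand and the rest --- is exactly the content of the theorem, and your proposal leaves it as an expectation rather than an argument. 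Moreover, the stationary-phase/Gauss-sum analysis you anticipate is not how the cancellation problem is actually resolved, and as stated (a lower bound $|TV_r(M,q)|\ge c\,r^{-e}$ for \emph{all} odd $r$) it is both stronger than needed and likely false in general; a subsequence suffices for the $\limsup$, and the paper only establishes the bound along one.

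Concretely, the mechanism you are missing is an exact algebraic collapse special to the double $D(M)$ along the subsequence $r=kA$ with $A=\mathrm{lcm}(a_1,\ldots,a_n)$. First, $e(D(M))=0$ kills the quadratic phase $e^{i\pi e\gamma^2/2r}$, and $A\mid r$ makes $e^{-2\pi i r m_j^2 b_j^*/a_j}=1$, so each inner sum over $m_j\in\mathbb{Z}/a_j\mathbb{Z}$ becomes a pure character sum equal to $a_j$ when the congruence holds and exactly $0$ otherwise --- non-resonant terms vanish identically, they are not merely exponentially small. Second, the surviving congruences for the $2n$ fibers $(a_j,\pm b_j)$ force $\boldsymbol{\mu}=(\mu_1,\ldots,\mu_n,-\mu_1,\ldots,-\mu_n)$, so the linear phases $e^{-i\pi\gamma(\mu_j+\mu_{n+j})/a_j r}$ all equal $1$, the prefactor $\prod_j\mu_j\mu_{n+j}=(-1)^n$ is a common sign, and the denominator $\sin^{2n+2a_\epsilon g-2}(\pi\gamma/r)$ is a positive real in $(0,1]$ raised to an even power. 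Every surviving summand is therefore a real number of the same sign with modulus at least $1$, so the sum has modulus at least the number $2|\mathcal{B}|k$ of surviving terms --- there is simply nothing to interfere. Without identifying the restriction to $r=kA$, the vanishing Euler number of the double, and the pairing $\boldsymbol{\mu}\mapsto(\boldsymbol{\mu},-\boldsymbol{\mu})$ that aligns all the signs, your planned ``careful stationary-phase and Gauss-sum analysis'' has no clear path to completion, so the proposal has a genuine gap at its central step.
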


The following corollary gives large families of examples of oriented Seifert fibered 3-manifolds that satisfy the condition of Theorem \ref{main.A}.

\begin{corollary}
\label{coro A}
    Let $M$ be an oriented Seifert fibered 3-manifold with boundary, described by the symbol 
\[
(\epsilon,g;(a_1, b_1), \ldots, (a_n, b_n))
,\] where each $a_j\geq3$ and gcd$(a_j,b_j)=1$ for all $j=1\ldots,n$. If either
\begin{enumerate}
    \item [(a)] $a_1,\ldots , a_n$ are coprime; or
    \item [(b)] $a_1=\ldots =a_n=a$ and there are $\mu_1,\ldots,\mu_n$ such that $\mu_1 b_1^* \equiv \ldots \equiv \mu_n b_n^* \pmod{a}$,
\end{enumerate} then $M$ and $D(M)$ satisfy Conjecture \ref{con:TV volume}.
\end{corollary}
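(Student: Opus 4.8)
The plan is to derive Corollary \ref{coro A} directly from Theorem \ref{main.A} by checking, in each of the two cases, that the arithmetic hypothesis of that theorem is met: namely, that one can find an integer $\gamma > 0$ together with signs $\mu_j = \pm 1$ so that $\gamma + \mu_j b_j^* \equiv 0 \pmod{a_j}$ for every $j$. Once this system of congruences is shown to be solvable, the conclusion $LTV(M) = LTV(D(M)) = 0$ is immediate from Theorem \ref{main.A}, so no further analysis of the Turaev-Viro invariants themselves is required.

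For part (a), I would first fix the signs arbitrarily, say $\mu_j = 1$ for all $j$, reducing the problem to solving
\[
\gamma \equiv -b_j^* \pmod{a_j}, \qquad j = 1, \ldots, n.
\]
Since the moduli $a_1, \ldots, a_n$ are pairwise coprime, the Chinese Remainder Theorem guarantees a solution $\gamma$ that is unique modulo $a_1 \cdots a_n$. Replacing $\gamma$ by $\gamma + k\, a_1 \cdots a_n$ for sufficiently large $k$ produces a positive representative, so the hypothesis of Theorem \ref{main.A} holds and the first case follows.

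For part (b), all moduli coincide, $a_1 = \cdots = a_n = a$, and the hypothesis supplies signs $\mu_1, \ldots, \mu_n$ with $\mu_1 b_1^* \equiv \cdots \equiv \mu_n b_n^* \pmod{a}$. Writing $c$ for this common residue, I would take $\gamma$ to be any positive integer with $\gamma \equiv -c \pmod{a}$; then $\gamma + \mu_j b_j^* \equiv -c + c \equiv 0 \pmod{a}$ simultaneously for every $j$, again verifying the hypothesis of Theorem \ref{main.A} and completing the second case.

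The argument is essentially a reduction, so there is no deep obstacle; the only points requiring care are the interpretation of ``relatively coprime'' in (a) as pairwise coprimality, which is exactly what makes the Chinese Remainder Theorem applicable for arbitrary right-hand sides, and the observation that the sign vector $\boldsymbol{\mu}$ may be chosen freely in (a) but is dictated by the hypothesis in (b). Ensuring $\gamma > 0$ is harmless in both cases, since the solution sets are full arithmetic progressions.
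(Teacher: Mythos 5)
Your proposal is correct and follows essentially the same route as the paper: both reduce the corollary to verifying the congruence hypothesis of Theorem \ref{main.A}, using the Chinese Remainder Theorem in case (a) and the common residue $\mu_j b_j^*$ in case (b). If anything, your treatment of case (b) is slightly more explicit than the paper's, since you actually construct $\gamma \equiv -c \pmod{a}$ rather than merely asserting that the stated congruences make a solution exist.
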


The next corollary discusses other large families of 3-manifolds that satisfy Theorem \ref{main.A}.

\begin{corollary}
\label{exam 3}

 Suppose $M$ and $D(M)$ are 3-manifolds in the statement of Theorem \ref{main.A}. Let $L$ be a link in $M$ or $D(M)$ that has simplicial volume (a.k.a. Gromov norm) zero. Then, Conjecture \ref{con:TV volume} is true for $M\backslash L$.

\end{corollary}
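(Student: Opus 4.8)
The plan is to reduce the statement to the large-$r$ growth of Reshetikhin--Turaev invariants of the ambient manifold with the link colored, via the surgery formula of \cite{detcherry2018turaev} (extended to general ambient manifolds in \cite{detcherry2020gromov}). Write $X$ for whichever of $M$ or $D(M)$ contains $L$, and let $c$ be the number of components of $L$. Since $L$ has vanishing simplicial volume, the complement $X\setminus L$ is a compact orientable $3$-manifold with toroidal boundary and $\|X\setminus L\|=0$, so the right-hand side of Conjecture \ref{con:TV volume} is $v_3\|X\setminus L\|=0$; it therefore suffices to prove $LTV(X\setminus L)=0$. The surgery formula expresses
\[
TV_r(X\setminus L,q)=\eta_r^{2}\sum_{\mathbf c}\bigl|RT_r(X,L_{\mathbf c})\bigr|^{2},
\]
a sum of non-negative terms over the colorings $\mathbf c$ of the $c$ components of $L$ by $\{1,\dots,\tfrac{r-1}{2}\}$, where $L_{\mathbf c}$ denotes $L$ colored by $\mathbf c$ and $\eta_r$ is the usual normalization, which is negligible since $\tfrac{2\pi}{r}\log\eta_r^{2}\to 0$.

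First I would establish the lower bound. Isolating the trivial coloring $\mathbf c=\mathbf 1$ leaves a single non-negative summand equal to $\eta_r^{2}\,|RT_r(X)|^{2}$, which for closed $X$ is $\eta_r^{2}\,TV_r(X,q)$ and in the bounded case differs from $TV_r(X,q)$ only by a factor polynomial in $r$. Hence $TV_r(X\setminus L,q)\ge \eta_r^{2}\,|RT_r(X)|^{2}$, and applying $\tfrac{2\pi}{r}\log|\cdot|$ and taking $\limsup$ over odd $r$ gives $LTV(X\setminus L)\ge LTV(X)$; by Theorem \ref{main.A} we have $LTV(X)=0$, so $LTV(X\setminus L)\ge 0$. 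For the upper bound, observe that there are at most $\bigl(\tfrac{r-1}{2}\bigr)^{c}$ colorings, a number polynomial in $r$, so it is enough to bound each colored invariant uniformly: if $|RT_r(X,L_{\mathbf c})|\le P(r)$ for a polynomial $P$ independent of $\mathbf c$, then $TV_r(X\setminus L,q)\le \eta_r^{2}\,\bigl(\tfrac{r-1}{2}\bigr)^{c}P(r)^{2}$ grows at most polynomially, whence $LTV(X\setminus L)\le 0$. Combining the two bounds yields $LTV(X\setminus L)=0=v_3\|X\setminus L\|$, and the argument is identical whether $L\subset M$ or $L\subset D(M)$, proving Conjecture \ref{con:TV volume} for $L$.

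The main obstacle is precisely this uniform polynomial bound on the colored invariants $RT_r(X,L_{\mathbf c})$, and here the hypothesis $\|X\setminus L\|=0$ is essential: since no hyperbolic pieces occur in the geometric decomposition of the graph manifold $X\setminus L$, drilling $L$ creates no hyperbolic volume, and one expects---paralleling torus links in $S^3$---that coloring $L$ cannot produce exponential growth. I would obtain the bound by extending the Seifert-fibered computation underlying Theorem \ref{main.A} to the invariants of $X$ carrying the colored link $L_{\mathbf c}$, controlling the colored contribution uniformly in $\mathbf c$; alternatively, one invokes the Gromov-norm growth estimates of \cite{detcherry2020gromov}, for which $\|X\setminus L\|=0$ forces the colored invariants to grow subexponentially. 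Making this uniformity explicit---so that the polynomial $P$ does not depend on the coloring---is the technical crux of the proof.
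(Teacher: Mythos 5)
Your overall skeleton matches the paper's: a lower bound on $LTV(X\setminus L)$ coming from the ambient manifold, and an upper bound coming from the vanishing of the Gromov norm. Your lower bound is fine --- isolating the trivial coloring in the surgery sum is exactly how the Dehn-filling monotonicity $TV_r(X)\le TV_r(X\setminus L)$ (Corollary \ref{3}) is proved, and the paper simply cites that monotonicity as a black box before invoking $LTV(X)=LTV(D(M))=0$ from Theorem \ref{main.A}.

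The problem is your upper bound. You route it through a uniform polynomial bound on the colored invariants $RT_r(X,L_{\mathbf c})$, declare that bound to be the ``technical crux,'' and then do not prove it; as written, your primary argument has a gap precisely at the step you yourself identify as essential. But no such bound is needed. The complement $X\setminus L$ is itself a compact orientable $3$-manifold with toroidal boundary and $\|X\setminus L\|=0$, so Theorem \ref{2} applies to it directly and gives $LTV(X\setminus L)\le C\,\|X\setminus L\|=0$ in one line --- this is what the paper does. Your ``alternative'' of invoking the Gromov-norm estimates of \cite{detcherry2020gromov} is the right idea, but you should apply them to $TV_r(X\setminus L)$ itself rather than to the individual colored summands; phrased that way, the upper bound requires no uniformity in $\mathbf c$ and no extension of the Seifert-fibered computation to colored links. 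With that substitution your proof closes and coincides with the paper's.
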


The outline of the proof of Theorem \ref{main.A} and the organization of this paper are as follows: 
In Section \ref{section 2}, we discuss preliminaries. Section \ref{section 3} begins with the proof of Lemma \ref{lemma}, which is essential to establish Theorem \ref{main.A}. For this, we first utilize the formula of Witten-Reshetikhin-Turaev invariants \( RT_r(\_~,e^{\frac{\pi i}{2r}}) \) for closed-oriented Seifert fibered 3-manifolds, as established in \cite{hansen2001reshetikhin}. We also examine the relationship between \( TV_r(M,  e^{\pi i / r}) \) and \( RT_r(D(M), e^{\pi i /2 r}) \), as stated in Theorem \ref{TV WRT con A} for Seifert fibered 3-manifolds with boundary, along with the corresponding relationship outlined in Theorem \ref{TV WRT con B} for closed-oriented Seifert fibered 3-manifolds. Then, by using Lemma \ref{tv lemma} we compute \( TV_r'(M,  e^{2\pi i / r}) \) and \( TV_r'(D(M),  e^{2\pi i / r}) \). This analysis allows us to prove Lemma \ref{lemma}. Following this, we prove  Theorem \ref{main.A}. Finally, in Section \ref{section 4}, we discuss example families where Theorem \ref{main.A} is satisfied.

\subsection{Acknowledgment}

I want to express my sincere gratitude to Professor Efstratia Kalfagianni for her invaluable support and guidance throughout the development of this paper. Her insightful advice and encouragement were instrumental in shaping my work. I also extend my appreciation to Professor Francis Bonahon for taking the time to listen to my presentation and for providing thoughtful feedback during its early stages. This research was partially supported in the form of a graduate Research Assistantship by NSF Grant DMS-230433.

\section{Preliminaries}
\label{section 2}

\subsection{Oriented Seifert fibered 3-manifolds}

Throughout this paper, we focus on oriented Seifert fibered 3-manifolds. The reader is referred to the citations \cite{seifert89topology,hatcher2007notes,jankins1983lectures} for more information.

An oriented Seifert fibered 3-manifold $M$ is a 3-dimensional manifold that can be decomposed into a collection of disjoint circles, called \textit{fibers}, arranged in a specific way. Each fiber in $M$ has a neighborhood that is homeomorphic to a fibered solid torus, denoted by $T(a, b)$ where $a\geq 1$ and $b$ are coprime integers.  The fibered solid torus $T(a, b)$ is constructed as the quotient space:
\[
T(a, b) = \big(D \times [0, 1]\big) / \sim,
\]
where the equivalence relation is $(x, 1) \sim \big(\pi(x), 0\big)$ for $x \in D$. Here, $\pi: D \to D$ denote the counter-clockwise rotation of the unit disk $D \subset \mathbb{C}$ by an angle of $\frac{2\pi b}{a}$. The image of the line segment $\{0\} \times [0, 1]$ (the central axis of the cylinder) in $T(a, b)$ is called the \textit{middle fiber}. If $a =  1$, the middle fiber is called an \textit{ordinary fiber}, whose neighborhood resembles a standard solid torus. If $a > 1$, the middle fiber is called a \textit{singular} or an \textit{exceptional fiber}. Additionally, the orientation of $T(a,b)$ is determined by the orientation of $D$ and $[0,1]$.

The \textit{2-orbifold base} $\Sigma$ of the Seifert fibered 3-manifold $M$ is the quotient space of $\Sigma \times S^1$ by identifying each fiber to a point.
To visualize the construction, let $\Sigma$ be a genus $g$ surface with $n$ punctures (boundary components). Remove $n$ solid tori $D_j\times S^1$ from $\Sigma\times S^1$ where $D_j$ for $j=1,\ldots,n$, represent neighborhood disks of $n$ punctures. Now, glue them back in along the boundary $\partial D_j\times S^1$ of the removed tori such that the meridian of each torus, represented by a loop in $\partial D_j \times S^1$ that bounds a disk in $D_j \times S^1$, is mapped to a curve with slope $\frac{b_j}{a_j}$ on the torus where integer $a_j\geq1$ coprime to $b_j$ for all $j=1,\ldots,n$. The 2-orbifold base of $M$ is genus $g$ with $n$ cone points (singularities) that arose due to collapsing each exceptional fiber to a point on the 2-orbifold base.

The following summarizes some of the properties.

\begin{itemize}
    \item The symbol of oriented Seifert fibered 3-manifold $M$ is 
    $(\epsilon,g;(a_1,b_1),\ldots,(a_n,b_n))$, where $n\geq 0$ and $g>0$ are integers, and  $a_j \geq 1$ and $b_j$ are coprime pairs of integers for $j=1,\ldots,n$. Each pair $(a_j,b_j)$ represents an exceptional fiber. For an orientable 2-orbifold base, denote $\epsilon=O$; for a non-orientable case, denote $\epsilon=N$. 
    \item The Euler characteristic of the 2-orbifold base is
    \[2-2g-\sum_j(1-1/a_j).\]
    \item The Seifert Euler number of the Seifert fibration of $M$ is \[ e(M)\coloneqq-\sum_jb_j/a_j.\]

    \item Two Seifert fibered 3-manifolds are equivalent if there is a fiber and orientation preserving homeomorphism between them.

    \item The following operations can be applied to an invariant pair while preserving the isomorphism class of the corresponding Seifert fibration:
    \begin{itemize}
        \item Addition or deletion of any invariant pair $(a,b)=(1,0)$.
        \item Replace any $(0,\pm 1)$ by $(0,\mp 1)$.
        \item  Replace each $(a_j,b_j)$ by $(a_j,b_j+k_ja_j)$ where $k_j\in\mathbb{Z}$ with the additional condition that $\sum k_j=0$ if $\partial M=\emptyset$.
    \end{itemize}
    
\end{itemize}

\subsection{Turaev-Viro invariants }

This section provides a brief overview of the construction of the Turaev-Viro invariants. We focus on the case where $r$ is odd and adopt the $SO(3)$-version for all the constructions and results, as was done in \cite{detcherry2020gromov}. For further details, the reader is referred to the citations \cite{turaev1992state,kauffman1994temperley}.

For notation, we denote by $TV_r$ the $SU(2)$-version of the Turaev-Viro invariants for integers $r \geq 3$, computed at a $4r$-th root of unity, and by $TV_r'$ the $SO(3)$-version, computed at a $2r$-th root of unity for odd $r\geq3$.

\subsubsection{Notation} 

Let $r\geq3$ be an odd integer and $q=e^\frac{2\pi i}{r}$. Let $I_r$ be the set where the elements are all non-negative, even integers less than $r-2$. That is, $I_r=\{0,2,4,\ldots,r-3\}$.

Now, we define some important notations and definitions for constructing Turaev-Viro invariants.

\begin{itemize}

    \item Denote by $\{n\}$ the \textit{quantum integer}  \[\{n\}=q^n-q^{-n}=2\text{sin}(\frac{2\pi}{r})[n]\] 
where $[n]=\frac{q^n-q^{-n}}{q-q^{-1}}=\frac{2\text{sin}(\frac{2n\pi}{r})}{2\text{sin}(\frac{2\pi}{r})}$.
    
    \item Denote by $\{n\}!$ the \textit{quantum factorial} \[\{n\}!=
\prod_{i=1}^n\{i\}.\]

    \item A triple $(i,j,k)$ is an \textit{admissible triple} of elements in $I_r$ if it satisfies the inequalities $i\leq j+k$,  $j\leq i+k$, $k\leq i+j$, and $i+j+k\leq 2(r-2)$.
    
    \item For an admissible triple $(i,j,k)$, define 
    \[\Delta (i,j,k)\coloneqq\zeta_r ^{\frac{1}{2}}\left(\frac{\left\{\frac{i+j-k}{2}\right\}!\left\{\frac{i+k-j}{2}\right\}!\left\{\frac{j+k-i}{2}\right\}!}{\left\{\frac{i+j+k}{2}+1\right\}!}\right)^{\frac{1}{2}}\] 
    where $\zeta_r=2\text{sin}(\frac{2\pi}{r})$.

    \item A 6-tuple $(i,j,k,l,m,n)$  of elements in $I_r$ is called \textit{admissible} if the triples $F_1=(i,j,k)$, $F_2=(j,l,n)$, $F_3=(i,m,n)$, and $F_4=(k,l,m)$ are admissible.

    \item For an admissible 6-tuple $(i,j,k,l,m,n)$, define \textit{quantum 6$j$-symbol} at the root $q$ as follows.

\[
\left| 
\begin{array}{ccc}
i & j & k \\
l & m & n \\
\end{array}
\right|
= (\zeta_r)^{-1}(\sqrt{-1})^{\lambda}\prod_{k=1}^4\Delta(F_{k})\sum_{z=\text{max}\{T_1,T_2,T_3,T_4\}}^{\text{min}\{Q_1,Q_2,Q_3\}}\frac{(-1)^z\{z+1\}!}{\prod_{b=1}^4\{z-T_b\}!\prod_{c=1}^3\{Q_c-z\}!}\]

where $\lambda =i+j+k+l+m+n$, and

\[\begin{split}
    T_1&=\frac{i+j+k}{2},\hspace{15pt} T_2=\frac{i+m+n}{2},\hspace{15pt} T_3=\frac{j+l+n}{2},\hspace{15pt} T_4=\frac{k+l+m}{2},\\
    Q_1&=\frac{i+j+l+m}{2},\hspace{15pt} Q_2=\frac{i+k+l+n}{2},\hspace{15pt} Q_3=\frac{j+k+m+n}{2}.
\end{split}
\]

\end{itemize}

Let $M$ be a compact orientable 3-manifold and consider $\mathcal{T}$ as a triangulation. In the case of $\partial M\neq \emptyset$, $\mathcal{T}$ is a (partially) ideal triangulation. That is, it consists of finitely many truncated Euclidean tetrahedra glued together along their faces, where the triangles arising from the truncations form a triangulation of $\partial  M$.

The assignment of an element of $I_r$ to the edges of $\mathcal{T}$ is known as \textit{coloring at level $r$} of a triangulated 3-manifold and denoted by $(M,\mathcal{T})$. Such a coloring is called \textit{admissible} if the 6-tuple assigned to the edges of tetrahedrons of $(M,\mathcal{T})$ satisfies the admissibility conditions.

Let $c$ be an admissible coloring of $(M,\mathcal{T)} $ at level $r$. 
Let \[|e|_c=(-1)^{c(e)}[c(e) +1],\] for a coloring $c$ assigned to an edge $e\in E$. For each tetrahedron $\Delta$  in $(M,\mathcal{T)} $ with coloring $c$, $ |\Delta |_c$ is the quantum 6$j$- symbol associated with the admissible 6-tuple assigned to $\Delta$ by $c$.

  Let \(V\) denote the set of interior vertices of \(\mathcal{T}\) that do not lie on \(\partial M\), and let \(E\) denote the set of edges of \(\mathcal{T}\), excluding those arising from the truncation of vertices.

 Now, we are ready to define the $SO(3)$-version of Turaev-Viro invariants using the state sum model.

 \begin{definition}[Theorem 3.2 \cite{detcherry2020gromov}]
     Let $A_r(\mathcal{T})$ be the set of $SO(3)$-admissible coloring of $(M,\mathcal{T})$ at level $r$, and let $b_2$ and and $b_0$ denote the second and zeroth $\mathbb{Z}_2$-Betti number of $M$. Define $SO(3)$-version of $r$-th Turaev-Viro invariant as
     \[TV_r'(M,q)=2^{b_2-b_0}(\eta ')_r^{2|V|}\sum_{c\in A_r(\mathcal{T})}{\prod_{e\in E}|e|_c \prod_{\Delta\in T}|\Delta|_c}\]
     where $\eta ' =\frac{2\text{sin}(\frac{2\pi}{r})}{\sqrt{r}}.$

 \end{definition}

 Next, we discuss an important Lemma that helps us to obtain the required $SO(3)$-version $TV_r'$ at $q$ in the proof of Theorem \ref{main.A}. The general proof for any 3-manifold can be found in Theorem~2.8 of \cite{detcherry2018turaev}, while proofs for certain special cases involving oriented Seifert fibered manifold with oriented 2-orbifold base are given in Lemma~3.2 of \cite{liu}.

\begin{lemma}[Theorem 2.8 \cite{detcherry2018turaev}]
\label{tv lemma}
Let $M$ be any $3$–manifold, and let $b_2$ and $b_0$ denote the second and zeroth $\mathbb{Z}_2$–Betti numbers of $M$, respectively. Suppose $r\geq3$ is an odd integer. Then,  

\begin{itemize}
    \item[1] $TV_r(M,e^{\frac{2\pi i}{r}})$ is Galois conjugate to $TV_r(M,e^{\frac{(r-1)\pi i}{r}})$. The same statement holds for $TV_r'$ in place of $TV_r$.  

    \item[2] $TV_r(M,e^{\frac{2\pi i}{r}}) \;=\; TV_3(M,e^{\frac{2\pi i}{3}})\,\cdot\, TV_r'(M,e^{\frac{2\pi i}{r}})$.  

    \item[3] $TV_r(M,e^{\frac{\pi i}{r}}) \;=\; TV_3(M,e^{\frac{\pi i}{3}})\,\cdot\, TV_r'(M,e^{\frac{(r-1)\pi i}{r}})$.  

    \item[4] If $\partial M = \emptyset$, then 
\[
TV_3(M,e^{\frac{2\pi i}{3}}) = 2^{\,b_2 - b_0},
\]
and if $\partial M \neq \emptyset$, then 
\[
TV_3(M,e^{\frac{2\pi i}{3}}) = 2^{\,b_2}.
\]

\end{itemize}
\end{lemma}

\subsection{Witten-Reshetikhin-Turaev invariants}

Unlike the previous subsection, the constructions and the results presented in this subsection are based on the $SU(2)$-version and denoted by $RT_r(\_~,e^\frac{\pi i}{2r})$ for $4r$-th root of unity $e^\frac{\pi i}{2r}$ and any integer $r\geq3$. In contrast, denote the $SO(3)$-version by $RT_r'(\_~,e^\frac{\pi i}{r})$ where $e^{\frac{\pi i}{r}}$ is a primitive $2r$-th root of unity and $r \geq 3$ is an odd integer.

 The Witten-Reshetikhin-Turaev TQFT is a functor from the category of (2+1)-dimensional cobordisms, where objects are closed-oriented 2-manifolds, and morphisms are cobordisms between compact, oriented 3-manifolds with boundaries to the category of finite-dimensional $\mathbb{C}$-vector spaces. Thus, for any oriented compact closed 3-manifold $M$, $RT_r(M,e^\frac{\pi i}{2r})\in \mathbb{C}$.

 The construction of Witten-Reshetikhin-Turaev invariants through a skein theoretic approach is found in \cite{blanchet1995topological}. The following theorem discusses summarized properties of the $r$-th Witten-Reshetikhin-Turaev invariant $RT_r(M,e^\frac{\pi i}{2r}):=RT_r(M)$ of 3-manifold $M$ at $e^\frac{\pi i}{2r}$ found in \cite{blanchet1995topological}.

\begin{theorem}[\cite{kirby19913}, \cite{blanchet1995topological}]
Let $M$ and $N$ be connected oriented 3-manifolds, and $\overline{M}$ is $M$ with reversed orientation.
   \begin{enumerate}
       \item $RT_r(\overline{M})=\overline{RT_r(M)}$.
       \item $RT_r(M\# N)=RT_r(M)\cdot RT_r(N)$.
       \item $RT_r(S^2\times S^1)=\frac{\sqrt{r/2}}{\text{sin}(\frac{\pi}{r})}$ and $RT_r(S^3)=1$.

       \item For any closed, compact, and oriented surface, $\Sigma$, $RT_r(\Sigma)$ is a finite-dimensional $\mathbb{C}$-vectors space (Hermitian vector space) denoted by $V_r(\Sigma)$, and $V_r(\emptyset)=\mathbb{C}  $. Moreover, for disjoint union $V_r(\Sigma_1\sqcup \Sigma_2)\simeq V_r(\Sigma_1)\otimes V_r(\Sigma_2)$ and $V_r(\overline{\Sigma})=\overline{V_r(\Sigma)}$.

       \item For any compact, oriented 3-manifold $M$ with $\partial M \simeq \Sigma$, there is an associated vector $RT_r(M)\in V_r(\Sigma)$. For disjoint union $M=M_1\sqcup M_2$, we have $RT_r(M)=RT_r(M_1)\otimes RT_r(M_2)\in V_r(\Sigma_1)\otimes V_r(\Sigma_2) .$
       
   \end{enumerate}
    
\end{theorem}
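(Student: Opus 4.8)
The plan is to derive every item from the skein-theoretic surgery construction of the Witten--Reshetikhin--Turaev invariant, treating the hard analytic input (invariance under Kirby moves) as the cited black box. Recall that every closed connected oriented $3$-manifold $M$ arises by integral surgery on a framed link $L\subset S^3$, and the construction of \cite{blanchet1995topological} defines $RT_r(M)$ as a normalized Kauffman-bracket evaluation of $S^3$ on $L$, with each component cabled by the distinguished ``$\omega$'' (Chebyshev/Jones--Wenzl) colouring, multiplied by powers of $\eta$ and by a signature correction $\kappa^{\sigma(L)}$ chosen precisely so that the result is invariant under handle slides and $\pm1$-framed stabilizations. I would take this invariance theorem (\cite{kirby19913},\cite{blanchet1995topological}) as given; each property is then checked on the level of surgery presentations at the root $q'=e^{\pi i/r}$, where the corresponding $A$ has $|A|=1$.

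For item (1), I would note that $\overline{M}$ is surgery on the mirror link with all framings negated, performed in $\overline{S^3}$; on skein elements this is exactly the substitution $A\mapsto A^{-1}$, which because $|A|=1$ is complex conjugation, while $\sigma(L)$ changes sign so that $\kappa^{\sigma}\mapsto\overline{\kappa^{\sigma}}$. The entire evaluation is therefore conjugated, giving $RT_r(\overline{M})=\overline{RT_r(M)}$. For item (2), a surgery presentation of $M\#N$ is the split (distant) union $L_M\sqcup L_N$; the Kauffman bracket is multiplicative on split links and the signature is additive, and the $\eta$-normalization is arranged so that passing from the split union to a connected sum divides out exactly one factor of the $S^3$-normalization. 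Combined with $RT_r(S^3)=1$ this yields $RT_r(M\#N)=RT_r(M)\cdot RT_r(N)$.

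For item (3), $S^3$ is surgery on the empty link, so $RT_r(S^3)=1$ by normalization, while $S^2\times S^1$ is surgery on the $0$-framed unknot, whose $\omega$-coloured evaluation collapses to the global quantum dimension $\mathcal{D}$; substituting the value of $\eta$ at $q'=e^{\pi i/r}$ and simplifying the resulting sine sum gives the closed form $\tfrac{\sqrt{r/2}}{\sin(\pi/r)}$. Finally, items (4) and (5) are the Atiyah--Segal axioms built into the same functorial package: the construction assigns to a closed oriented surface $\Sigma$ a Hermitian skein module $V_r(\Sigma)$ with $V_r(\emptyset)=\mathbb{C}$, is monoidal so that $V_r(\Sigma_1\sqcup\Sigma_2)\cong V_r(\Sigma_1)\otimes V_r(\Sigma_2)$, sends orientation reversal to the conjugate space $V_r(\overline{\Sigma})=\overline{V_r(\Sigma)}$, and attaches to a $3$-manifold with boundary the vector $RT_r(M)\in V_r(\partial M)$ compatibly with disjoint union; I would quote these structural statements directly rather than rebuild them.

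The only genuinely substantive obstacle is the well-definedness that underlies everything: that the normalized skein evaluation is invariant under the two Kirby moves, so that $RT_r(M)$ depends on $M$ alone. Since that is the cited input, the remaining work is purely the bookkeeping of the $\eta$- and $\kappa^{\sigma}$-factors in (2) and (3), which is routine once the normalization conventions at $q'=e^{\pi i/r}$ are fixed; the conjugation argument in (1) and the TQFT axioms in (4)--(5) require no further estimates.
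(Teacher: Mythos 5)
The paper itself offers no proof of this theorem: it is stated as a summary of standard properties and attributed entirely to the cited sources (Kirby--Melvin and Blanchet--Habegger--Masbaum--Vogel), so there is no ``paper's proof'' to compare against line by line. Your sketch is a correct and faithful outline of how these properties are actually established in those references: Kirby-move invariance of the normalized $\omega$-cabled bracket is the substantive input, item (1) follows from $A\mapsto A^{-1}$ being complex conjugation on the unit circle together with the sign change of the signature correction, item (2) from multiplicativity of the bracket on split links plus the normalization $RT_r(S^3)=1$, item (3) from evaluating the empty link and the $0$-framed unknot (whose $\omega$-evaluation is the global dimension $1/\eta=\sqrt{r/2}/\sin(\pi/r)$), and items (4)--(5) are the TQFT axioms of the skein-module construction.

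Two small imprecisions, neither fatal. First, in item (2) the surgery on the distant union $L_M\sqcup L_N\subset S^3$ already yields $M\#N$ directly; the only issue is the bookkeeping of the $\eta$-powers (one extra factor of $\eta$ per closed manifold in the unnormalized invariant), not a geometric passage ``from split union to connected sum.'' Second, describing $\overline{M}$ as surgery ``performed in $\overline{S^3}$'' is redundant --- taking the mirror image of $L$ in $S^3$ (which negates all framings and linking numbers) already presents $\overline{M}$. With those caveats, your argument matches the standard proofs the paper is citing.
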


\subsection{Witten-Reshetikhin-Turaev invariants of oriented Seifert fibered 3-manifolds}

We use the following derivation of the $r$-th $SU(2)$-invariant $RT_r(M,e^\frac{\pi i}{2}r)$ of a closed, oriented Seifert fibered 3-manifold $M$ to prove Theorem \ref{main.A}.

\begin{lemma}[Theorem 8.4 \cite{hansen2001reshetikhin}]
\label{lemma seifert}
Let $M$ be closed, oriented Seifert fibered manifold with symbol $(\epsilon,g;(a_1,b_1), \ldots ,(a_n,b_n))$, where $n\geq 0$ and $g>0$ are integers, and where $a_j \geq 1$ and $b_j$ are coprime pairs of integers for $j=1,\ldots,n$. The Seifert Euler number of the Seifert fibration is 
    \[e(M)=-\sum_j b_j/a_j.\]
   Then,
   \[\begin{split} RT_r(M,e^{\frac{\pi i}{2r}})=&e^{\frac{\pi i}{2r}{[3(a_\epsilon -1)\text{sgn}(e(M))-e(M)-12 \sum_{j=1}^n s(b_j,a_j)}]}\\
                                              &\times (-1)^{a_\epsilon g}\frac{i^nr^{a_\epsilon g/2-1}}{2^{n+a_\epsilon g/2-1}\sqrt{\prod_ja_j}}e^{\frac{3\pi i}{4}(1-a_\epsilon)\text{sgn}(e(M))}{Z_{(\epsilon,r)}(M,e^\frac{\pi i}{2r})}\end{split}\]
\[Z_{(\epsilon,r)}(M,e^\frac{\pi i}{2r}) = \sum_{(\gamma, \boldsymbol{\mu})} \left \{\frac{(-1)^{\gamma a_\epsilon g}e^{\frac{\pi e(M) \gamma^2i}{2r}}\prod_{j=1} ^{n} \left(\mu_j e^{ {  \frac{ - \pi \gamma\mu_j i}{a_j r}}}\right)}{\mathrm{sin}^{n+a_\epsilon g-2}(\pi \gamma / r)}  \cdot \sum_{\boldsymbol{m}}\prod_j e^ {- \left(\frac{2 \pi m_j(\gamma + \mu_jb_j^*)}{a_j}+\frac{2\pi r m_j^2 b_j^*}{a_j} \right)\cdot i} \right \} \]

   where $\epsilon=O$ for an oriented 2-orbifold base and $\epsilon =N$ for a non-orientable base. Also, let $a_{O} =2$ and $a_{N}=1$.

                    Here, $j$ ranges over \{$1, \ldots , n$\} and $(\gamma,\boldsymbol{\mu},\boldsymbol{m})=(\gamma,(\mu_1,\ldots,\mu_n),(m_1,\ldots,m_n))$ ranges over $\{1,2,\ldots,r-1\} \times \{\pm 1\}^n \times \mathbb{Z}/{a_1\mathbb{Z}}\times \ldots \times \mathbb{Z}/{a_n\mathbb{Z}}. $

   The notation $b_j^*$ denotes any congruence inverse of $b_j$ modulo $a_j$, namely $b_j b_j^* \equiv 1 (\text{mod }a_j)$, $\mathrm{sgn}(e(M))$ denotes the sign of $e(M)$, with value $\pm 1$ or $0$; $\mathrm{s}(b_j,a_j)$ denotes the Dedekind sum $(4a)^{-1} \cdot \sum_{l\in \{1,2,\ldots,a_j-1\}}\mathrm{cot}(\pi l/a_j)\mathrm{cot}(\pi l b_j/a_j).$
   
\end{lemma}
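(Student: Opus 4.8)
The plan is to compute $RT_r(M,e^{i\pi/r})$ from a surgery presentation of $M$ together with the rational surgery formula of the $SU(2)$ modular functor, and then reduce all the resulting finite exponential sums to closed form via quadratic Gauss-sum reciprocity. First I would record a standard plumbing (surgery) description of the Seifert fibration: for an orientable genus $g$ base, $M$ is surgery on a star-shaped framed link whose central component encodes the fiber direction over the genus-$g$ surface and whose $n$ legs encode the exceptional fibers $(a_j,b_j)$ as rational surgery coefficients $a_j/b_j$; for a non-orientable base one uses the corresponding surgery description obtained from the $\#\mathbb{R}P^2$ summands. This dichotomy is precisely what the parameter $a_\epsilon$ (with $a_0=2$ for the orientable case and $a_n=1$ for the non-orientable case) is designed to carry through the computation.

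Next I would apply the surgery formula $RT_r(M)=\kappa^{-\sigma(L)}\langle\omega,\dots\rangle$, where $L$ is the framed link, $\sigma(L)$ is the signature of its linking matrix, $\kappa$ is the framing anomaly, and $\omega$ is the Kirby color. Cabling the central loop by a color $\gamma$ and using that each handle of the base surface inserts a copy of the handle operator, the genus-$g$ base contributes $S$- and $T$-matrix factors that, after summation over the color $\gamma\in\{1,\dots,r-1\}$, produce the denominators $\sin^{\,n+a_\epsilon g-2}(\pi\gamma/r)$ appearing in $Z_{(\epsilon,r)}$. Each leg $j$, evaluated through the rational-surgery (continued-fraction) procedure, contributes a finite quadratic exponential sum over $\mathbb{Z}/a_j\mathbb{Z}$; this is the origin of the inner sum $\sum_{\boldsymbol{m}}\prod_j e^{-i(\cdots)}$ and of the signs $\mu_j=\pm1$, which enter when the sines from the $S$-matrix are written as differences of exponentials (equivalently, as a sum over the Weyl reflection on each leg).

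The analytic heart of the argument is the evaluation of these quadratic Gauss sums by reciprocity. Applying the Gauss-sum reciprocity law to the $\mathbb{Z}/a_j\mathbb{Z}$ sum attached to the $j$-th exceptional fiber produces three things simultaneously: the normalizing factor $1/\sqrt{a_j}$, the phase carrying the Dedekind sum $s(b_j,a_j)$ (through Rademacher's identity relating the argument of a Gauss sum to a Dedekind sum), and a contribution to the overall framing correction. Separately I would compute $\sigma(L)$ for the star-shaped plumbing matrix and match it against the term $3(a_\epsilon-1)\mathrm{sgn}(e(M))-e(M)$ in the exponential prefactor, using that for a star-shaped graph the signature is governed by the signs of the surgery coefficients and hence by $\mathrm{sgn}(e(M))$. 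Collecting the prefactor phases together with the powers of $r$, of $2$, and of $i$, and the sign $(-1)^{a_\epsilon g}$, then assembles the stated closed form.

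I expect the main obstacle to be the phase bookkeeping in the Gauss-sum and signature steps: keeping every $4r$-th root of unity mutually consistent while simultaneously extracting the Dedekind sums $s(b_j,a_j)$ from reciprocity, identifying the contribution of $\sigma(L)$ with the $\mathrm{sgn}(e(M))$-dependent corrections, and treating the non-orientable case $a_\epsilon=1$ in parallel with the orientable one, where both the surgery description and the handle operator change. Making the corrections $3(a_\epsilon-1)\mathrm{sgn}(e(M))$ and $e^{i\frac{3\pi}{4}(1-a_\epsilon)\mathrm{sgn}(e(M))}$ agree on the nose is the delicate point; the remainder is a careful but essentially routine application of the $S$- and $T$-matrix calculus.
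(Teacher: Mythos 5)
The paper does not prove this statement: it is imported verbatim as Theorem~8.4 of the cited reference \cite{hansen2001reshetikhin} (with the orientable-base case also appearing in \cite{lawrence1999witten}), so there is no in-paper argument to compare yours against. Your outline is, at the level of strategy, exactly the derivation used in those sources: present the Seifert fibration by a star-shaped framed (plumbing) link, apply the rational surgery formula with the $\kappa^{-\sigma(L)}$ framing anomaly, sum over the color $\gamma$ of the central component so that the genus and the legs produce the $\sin^{n+a_\epsilon g-2}(\pi\gamma/r)$ denominator and the signs $\mu_j$, and evaluate the leg contributions by Gauss-sum reciprocity, which is where the $1/\sqrt{\prod_j a_j}$, the Dedekind sums $s(b_j,a_j)$, and the $\mathrm{sgn}(e(M))$-dependent phase corrections come from. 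So the approach is the right one and matches the cited proof.

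That said, what you have written is a plan rather than a proof. Every quantitative ingredient of the stated formula --- the exact coefficient $-12$ on the Dedekind sums, the phase $e^{i\frac{3\pi}{4}(1-a_\epsilon)\mathrm{sgn}(e(M))}$, the powers of $2$, $r$, and $i$, the identification of $\sigma(L)$ for the star-shaped matrix with $\mathrm{sgn}(e(M))$, and the parallel treatment of the non-orientable case where the handle operator and surgery presentation both change --- is deferred to ``careful bookkeeping,'' and these are precisely the places where such computations go wrong. The formula is also sensitive to the choice of root of unity: the paper explicitly discusses (via Remark~8.6 of \cite{hansen2001reshetikhin}) why the invariant at $e^{\pi i/r}$ agrees with the one at $e^{\pi i/2r}$, and your sketch does not address which convention the Gauss sums are being evaluated in. None of this makes the outline wrong, but as submitted it cannot be checked for correctness of the actual closed form; to turn it into a proof you would need to carry out the reciprocity and signature computations explicitly, or simply cite Hansen as the paper does.
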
  

\begin{remark}
    The formula for $RT_r(\_~,e^\frac{\pi i}{2r})$ of oriented Seifert fibered 3-manifold with orientable 2-orbifold base is discussed in \cite{lawrence1999witten}, and it coincides with our Lemma \ref{lemma seifert}.
\end{remark}

\subsection{Relationship between Turaev-Viro invariants and Witten-Reshetikhin-Turaev invariants}

We use Theorem \ref{TV WRT con A} and Theorem \ref{TV WRT con B} to establish the relationship between Turaev-Viro invariants and Witten-Reshetikhin-Turaev invariants for integers $r\geq 3$ under $SU(2)$-setting. The proofs of these theorems for closed manifolds are due to \cite{roberts1995skein,turaev2010quantum}, and for manifolds with a non-empty boundary for $SU(2)$-setting are due to \cite{robertproof,roberts1995skein}, and $SO(3)$-setting can be found in \cite{detcherry2018turaev}.

\begin{theorem}[\cite{robertproof}]
\label{TV WRT con A}
    Let $M$ be a 3-manifold with boundary, and $r\geq3$ be an integer. Then,
   
     \[TV_r(M,e^\frac{\pi i}{r})=\eta_r^{-\chi(M)}RT_r(D(M),e^{\frac{\pi i}{2r}}),\]

    where $\chi(M)$ is the Euler characteristic of $M$ and $\eta_r=\frac{2\text{sin}(\frac{2\pi}{r})}{\sqrt{2}r}.$
\end{theorem}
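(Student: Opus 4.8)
The plan is to deduce the identity from the fundamental fact that the Turaev--Viro theory is the ``conjugate-tensor-square'' of the Witten--Reshetikhin--Turaev theory, combined with the gluing axiom of the latter. I would first dispose of the closed case as a warm-up: for a closed oriented $3$-manifold $N$ one has $TV_r(N,q)=\lvert RT_r(N,e^{\pi i/r})\rvert^2$, a theorem of Roberts \cite{roberts1995skein} and Turaev \cite{turaev2010quantum}. Since a closed odd-dimensional manifold has $\chi(N)=0$ and its double is the disjoint union $D(N)=N\sqcup\overline N$, multiplicativity under disjoint union (item 5 of the summary theorem) together with $RT_r(\overline N)=\overline{RT_r(N)}$ (item 1) gives
\[
RT_r(D(N),e^{\pi i/r})=RT_r(N)\,\overline{RT_r(N)}=\lvert RT_r(N)\rvert^2=TV_r(N,q),
\]
which is exactly the asserted formula in the case $\chi(N)=0$. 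This both pins down the normalization conventions and isolates what genuinely has to be proved in the bounded case.

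For $M$ with boundary the key step is to read the state-sum number $TV_r(M,q)$ through the $(2+1)$ TQFT. On a surface $\Sigma$ the Turaev--Viro space is $V_r(\Sigma)\otimes\overline{V_r(\Sigma)}\cong\operatorname{End}(V_r(\Sigma))$, and the RT vector $RT_r(M)\in V_r(\partial M)$ attached to $M$ (item 5) is the basic object. The doubling $D(M)=M\cup_{\partial M}\overline M$ is then computed by the gluing axiom: using the natural pairing $V_r(\partial M)\otimes V_r(\overline{\partial M})\to\mathbb C$ and $RT_r(\overline M)=\overline{RT_r(M)}$, I would obtain
\[
RT_r(D(M),e^{\pi i/r})=\big\langle RT_r(M),\,\overline{RT_r(M)}\big\rangle=\lVert RT_r(M)\rVert^2 .
\]
It remains to identify the combinatorial state sum defining $TV_r(M,q)$ — summed over all admissible colorings of the non-truncation edges of the (partially ideal) triangulation — with this squared norm. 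Here I would invoke Roberts' chain-mail / Kirby-color technique \cite{roberts1995skein}: encode a handle decomposition of $M$ by the chain-mail link, use the encircling (fusion) property of the Kirby color $\omega$ to collapse the link evaluation onto the Turaev--Viro state sum, and recognize the resulting scalar as the RT evaluation of the double. This is the content of \cite{robertproof}, and it is the conceptual heart of the argument.

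The step I expect to be the main obstacle is the normalization bookkeeping that produces the exponent $-\chi(M)$. The state sum carries one factor of $(\eta')_r^{2}$ per \emph{interior} vertex, whereas the TQFT pairing $\lVert RT_r(M)\rVert^2$ is normalized intrinsically; reconciling the two requires tracking how interior versus boundary vertices of the triangulation enter and expressing the discrepancy through $\chi(M)=|V|-|E|+|F|-|T|$, equivalently $\chi(M)=\tfrac12\chi(\partial M)$ in the relevant cases. Concretely, one must check that the boundary vertices, which are omitted from $|V|$, account for precisely the factor $\eta^{-\chi(M)}$ relating the unnormalized state sum to $RT_r(D(M))$. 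Triangulation independence of $TV_r$ lets one choose any convenient triangulation for this count, and for the Seifert fibered manifolds of interest — whose boundary is a union of tori, so $\chi(\partial M)=0$ and hence $\chi(M)=0$ — the correction factor is trivial, leaving the clean identity $TV_r(M,q)=RT_r(D(M),e^{\pi i/r})$ that ultimately feeds into the proof of Theorem \ref{main.A}.
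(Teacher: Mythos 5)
The paper offers no proof of Theorem~\ref{TV WRT con A}: it is quoted as a known result, attributed to \cite{robertproof} (with the closed case from \cite{roberts1995skein,turaev2010quantum} and the $SO(3)$ version from \cite{detcherry2018turaev}), so there is no in-paper argument to compare yours against line by line. That said, your sketch is a faithful reconstruction of the standard proof: the reduction of the closed case to Theorem~\ref{TV WRT con B} via $D(N)=N\sqcup\overline N$ and multiplicativity is correct; the two-step strategy for the bounded case --- first identify the state sum with $\lVert RT_r(M)\rVert^2$ by Roberts' chain-mail argument, then use the gluing axiom and $RT_r(\overline M)=\overline{RT_r(M)}$ to recognize $\lVert RT_r(M)\rVert^2$ as $RT_r(D(M))$ --- is exactly how the cited references proceed; and you correctly locate the $\eta^{-\chi(M)}$ factor in the vertex-normalization bookkeeping and observe that it is trivial when $\partial M$ is toroidal (indeed $\chi(D(M))=2\chi(M)-\chi(\partial M)=0$ forces $\chi(M)=\tfrac12\chi(\partial M)$).

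The one point your outline passes over, and which is a genuine ingredient rather than bookkeeping, is the mismatch of gauge groups and roots of unity in the statement: the left-hand side is the $SO(3)$ state sum at $q=e^{2\pi i/r}$, while the right-hand side is the $SU(2)$ surgery invariant at $e^{\pi i/r}$. The chain-mail argument applied to the $SO(3)$ spherical category produces a priori the $SO(3)$ Reshetikhin--Turaev invariant of the double, not the $SU(2)$ one; identifying the two for odd $r$ requires the Kirby--Melvin-type splitting of the $SU(2)$ theory at a $2r$-th root of unity, or equivalently the coincidence of the relevant modular categories at odd level --- the same point the paper itself leans on via Remark~8.6 of \cite{hansen2001reshetikhin} and the discussion in \cite{detcherry2018turaev}. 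A complete write-up would have to make this step explicit; as it stands your argument, read literally, proves the identity with the $SO(3)$ invariant of $D(M)$ on the right-hand side.
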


The following is the closed 3-manifold version for Theorem \ref{TV WRT con A} and can be easily derived from Theorem \ref{TV WRT con A} using the properties of Hermitian form of $D(M)$.

\begin{theorem}[\cite{robertproof}]
\label{TV WRT con B}
    For $M$ an oriented compact 3-manifold with empty or toroidal boundary and $r\geq 3$ an integer, we have
    \[TV_r(M,e^\frac{\pi i}{r})=||RT_r(M,e^\frac{\pi i}{2r})||^2.\]
\end{theorem}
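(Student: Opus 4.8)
The plan is to derive this statement directly from Theorem \ref{TV WRT con A} together with the TQFT gluing properties recorded in the summary theorem for $RT_r$. Recall that the double $D(M)$ is by definition the closed manifold obtained by gluing $M$ to its orientation reversal $\overline{M}$ along their common boundary $\Sigma = \partial M$, so that $D(M) = M \cup_\Sigma \overline{M}$. I would first dispose of the factor $\eta^{-\chi(M)}$ in Theorem \ref{TV WRT con A}: since $M$ is an odd-dimensional compact manifold, the doubling relation $\chi(D(M)) = 2\chi(M) - \chi(\partial M)$ together with $\chi(D(M)) = 0$ (a closed odd-dimensional manifold) gives $\chi(M) = \tfrac{1}{2}\chi(\partial M)$. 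When $\partial M$ is empty or a disjoint union of tori we have $\chi(\partial M) = 0$, hence $\chi(M) = 0$ and $\eta^{-\chi(M)} = 1$. Theorem \ref{TV WRT con A} then already yields
\[ TV_r(M, q) = RT_r\!\left(D(M), e^{\pi i/r}\right). \]

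The second step is to identify $RT_r(D(M))$ with $\|RT_r(M)\|^2$ via the functorial gluing rule. By property (5) of the summary theorem, $RT_r(M)$ is a vector in the Hermitian space $V_r(\Sigma)$, and by properties (1) and (4), $RT_r(\overline{M}) = \overline{RT_r(M)} \in V_r(\overline{\Sigma}) = \overline{V_r(\Sigma)}$. Gluing $M$ to $\overline{M}$ along $\Sigma$ corresponds, under the $(2+1)$-dimensional cobordism functor, to applying the natural duality pairing
\[ V_r(\Sigma) \otimes V_r(\overline{\Sigma}) \longrightarrow V_r(\emptyset) = \mathbb{C} \]
to the pair of vectors $RT_r(M)$ and $RT_r(\overline{M})$. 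Because the Hermitian form on $V_r(\Sigma)$ is defined as this pairing precomposed with the conjugation identification $V_r(\overline{\Sigma}) \cong \overline{V_r(\Sigma)}$, the outcome of the gluing is exactly $\langle RT_r(M), RT_r(M)\rangle = \|RT_r(M)\|^2$. Combining the two displayed steps then gives the asserted identity $TV_r(M,q) = \|RT_r(M, e^{\pi i/r})\|^2$.

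I expect the main technical point to be the careful matching of gluing conventions rather than any substantial new computation. Specifically, one must verify that the pairing induced by the cylinder $\Sigma \times [0,1]$, read as a cobordism from $\Sigma \sqcup \overline{\Sigma}$ to $\emptyset$, coincides under the identification $V_r(\overline{\Sigma}) \cong \overline{V_r(\Sigma)}$ with the Hermitian inner product on $V_r(\Sigma)$, and that no residual anomaly or framing correction survives in the $SU(2)$-theory at $e^{\pi i/r}$ for odd $r$ (so that the naive gluing formula holds with trivial coefficient, consistent with $\eta^{-\chi(M)} = 1$). Once these conventions are pinned down — which is essentially the content invoked when the excerpt says the result is ``easily derived from Theorem \ref{TV WRT con A} using the properties of the Hermitian form of $D(M)$'' — the theorem follows.
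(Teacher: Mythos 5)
Your proposal is correct and follows exactly the route the paper indicates: the paper gives no independent proof of Theorem \ref{TV WRT con B}, stating only that it "can be easily derived from Theorem \ref{TV WRT con A} using the properties of the Hermitian form of $D(M)$," and your two steps — killing the factor $\eta^{-\chi(M)}$ via $\chi(M)=\tfrac12\chi(\partial M)=0$ for toroidal or empty boundary, then identifying $RT_r(D(M))$ with $\langle RT_r(M),RT_r(M)\rangle=\|RT_r(M)\|^2$ via the gluing axiom and the conjugation identification $V_r(\overline{\Sigma})\cong\overline{V_r(\Sigma)}$ — are precisely that derivation, carried out with appropriate care about conventions.
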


Moreover, we use the following results obtained for Seifert fibered 3-manifolds in Sections Three, and Four to establish Conjecture \ref{con:TV volume}. Note that we have adjusted the notation 
to be consistent with our conventions: although Conjecture \ref{con:TV volume} uses 
$TV_r$ for the $SO(3)$ version, here we use $TV_r'$ to align with the notation introduced in the Introduction and at the beginning of this section. 

\begin{theorem}[Theorem 6.2 \cite{detcherry2020gromov}]
\label{1}
    Let $M$ be a compact, orientable 3-manifold that is Seifert fibered. Then, there exist constants $A>0$ and $N>0$, depending on $M$, such that
    \[TV_r'(M,q) \leq Ar^N.\]
    
    Thus, $LTV(M)\leq 0 $.
    
\end{theorem}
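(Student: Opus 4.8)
The plan is to reduce the estimate to a bound on the Witten-Reshetikhin-Turaev invariant of a \emph{closed} oriented Seifert fibered manifold, and then to read the polynomial growth directly off the Hansen-Takata formula of Lemma \ref{lemma seifert}. If $M$ has boundary I would invoke Theorem \ref{TV WRT con A} to write $TV_r(M,q)=\eta_r^{-\chi(M)}RT_r(D(M),e^{\pi i/r})$, where the double $D(M)$ is closed oriented Seifert fibered; if $M$ is closed I would instead use Theorem \ref{TV WRT con B} to get $TV_r(M,q)=\|RT_r(M,e^{\pi i/r})\|^2$. Since $\eta_r=2\sin(2\pi/r)/\sqrt r$ behaves like $r^{-3/2}$ up to constants, the factor $\eta_r^{-\chi(M)}$ is itself polynomial in $r$, so in either case it suffices to show that $|RT_r(W,e^{\pi i/r})|$ grows at most polynomially for a closed oriented Seifert fibered $W$.

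For this I would apply Lemma \ref{lemma seifert} to $W$ and estimate the two factors of the formula separately by the triangle inequality. In the scalar prefactor every exponential, every sign, and the factor $i^n$ has modulus $1$, so its modulus is exactly $r^{a_\epsilon g/2-1}/(2^{n+a_\epsilon g/2-1}\sqrt{\prod_j a_j})$, which is $O(r^{a_\epsilon g/2-1})$. For $Z_{(\epsilon,r)}$ I would bound the inner sum over $\boldsymbol m$ first: it has $\prod_j a_j$ terms, each of modulus $1$, and $\prod_j a_j$ is a constant depending only on $W$, so $|\sum_{\boldsymbol m}(\cdots)|\leq \prod_j a_j$. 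In the outer summand the numerator factors $(-1)^{\gamma a_\epsilon g}$, $e^{i\pi e(W)\gamma^2/2r}$ and each $\mu_j e^{-i\pi\gamma\mu_j/(a_j r)}$ also have modulus $1$.

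The only factor that is not of unit modulus is the denominator $\sin^{n+a_\epsilon g-2}(\pi\gamma/r)$. Because $\gamma$ runs over $\{1,\dots,r-1\}$ we have $|\sin(\pi\gamma/r)|\geq \sin(\pi/r)$, and $\sin(\pi/r)\geq 2/r$ for all $r\geq 3$ by Jordan's inequality; hence when the exponent $n+a_\epsilon g-2$ is positive this denominator is at least $(2/r)^{\,n+a_\epsilon g-2}$, so its reciprocal is $O(r^{\,n+a_\epsilon g-2})$ (and when the exponent is non-positive the whole factor is bounded by $1$). Each term of the $(\gamma,\boldsymbol\mu)$ sum is therefore $O(r^{\max(0,\,n+a_\epsilon g-2)})$, and since there are only $(r-1)2^n=O(r)$ such terms we get $|Z_{(\epsilon,r)}|=O(r^{1+\max(0,\,n+a_\epsilon g-2)})$. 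Multiplying by the prefactor bound gives $|RT_r(W,e^{\pi i/r})|=O(r^N)$ for a suitable $N$, and feeding this back through Theorem \ref{TV WRT con A} or \ref{TV WRT con B} yields $TV_r(M,q)\leq A r^{N'}$. Finally $LTV(M)=\limsup_{r\to\infty}\frac{2\pi}{r}\log|TV_r(M,q)|\leq \limsup_{r\to\infty}\frac{2\pi}{r}(\log A+N'\log r)=0$.

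I expect the genuine content to be entirely bookkeeping: the estimate is a sequence of crude triangle-inequality bounds with no cancellation exploited, which is exactly why it produces only the upper bound $LTV(M)\leq 0$ rather than the conjectured equality. The one place that requires a little care is the denominator $\sin^{n+a_\epsilon g-2}(\pi\gamma/r)$, where one must separate the sign of the exponent and use $\gamma\geq 1$ to keep the sine bounded away from zero at a polynomial rate; the remaining subtlety is simply to confirm that the $\boldsymbol\mu$- and $\boldsymbol m$-ranges, and hence $\prod_j a_j$ and $2^n$, are independent of $r$, so that they contribute only $r$-independent constants.
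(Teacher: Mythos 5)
This statement is not proved in the paper at all: it is imported verbatim as Theorem 6.2 of \cite{detcherry2020gromov}, so there is no internal proof to compare against. Your argument is a genuinely different (and essentially correct) derivation. The original proof in \cite{detcherry2020gromov} does not touch the Hansen--Takata formula; it exploits the TQFT structure, realizing a Seifert fibered $M$ by Dehn filling a link in a product $\Sigma\times S^1$, using the Dehn filling inequality (Corollary \ref{3} here) together with the fact that $TV_r(\Sigma\times S^1)$ equals a Verlinde-type dimension that grows polynomially in $r$. Your route instead reduces to a closed Seifert fibered manifold via Theorem \ref{TV WRT con A} or \ref{TV WRT con B} and then applies the triangle inequality term by term to Lemma \ref{lemma seifert}; the estimates themselves are sound (the prefactor is $O(r^{a_\epsilon g/2-1})$, the $\boldsymbol m$-sum contributes the constant $\prod_j a_j$, the only non-unimodular factor is the sine denominator, and $\sin(\pi\gamma/r)\ge\sin(\pi/r)\ge 2/r$ handles it, with the sign of the exponent $n+a_\epsilon g-2$ correctly split into cases). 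What your approach buys is a completely explicit exponent $N$; what it costs is generality: Lemma \ref{lemma seifert} as stated in this paper assumes $g>0$, so your argument as written does not cover Seifert fibrations over genus-zero orbifold bases (lens spaces, prism manifolds, Brieskorn spheres, etc.), which the theorem also claims. The full Hansen--Takata formula does cover $g=0$ with a slightly different shape, so the gap is repairable, but you would need to either invoke that extension or handle the $g=0$ case separately; the TQFT proof in \cite{detcherry2020gromov} avoids this case analysis entirely.
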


\begin{theorem}[Theorem 1.1 \cite{detcherry2020gromov}]
\label{2}
    There exists a universal constant $C > 0$ such that for any compact orientable 3-manifold $M$ with empty or toroidal boundary, we have
\[LTV(M) \leq C ||M ||,\]
where $||M||$ denotes the Gromov norm (a.k.a. simplicial volume) of an oriented 3-manifold $M$.
\end{theorem}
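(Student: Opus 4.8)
The plan is to bound $LTV(M)$ by combining a state-sum estimate, which controls the exponential growth of $TV_r(M,q)$ by the number of tetrahedra in a triangulation, with the geometric (JSJ) decomposition of $M$, so that the simplicial volume enters the estimate only through the hyperbolic pieces. Two structural facts drive the argument. First, by Gromov, Thurston and Soma, the simplicial volume is additive under gluing along incompressible tori and is carried entirely by the hyperbolic pieces $N$ of the decomposition, so that $v_3\|M\|=\sum_N \mathrm{Vol}(N)$, while Seifert, graph, and $\mathrm{Sol}$ pieces contribute $0$. Second, the growth rate $LTV$ is subadditive under the same gluings. Matching these two properties against each other is the backbone of the proof.

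First I would establish the state-sum bound $LTV(M)\le C_0\,t(\mathcal{T})$ for every (ideal) triangulation $\mathcal{T}$ with $t(\mathcal{T})$ tetrahedra, for a universal constant $C_0$. Starting from the definition of $TV_r(M,q)$, the number of admissible colorings is at most $r^{|E|}$, the edge weights $|e|_c=(-1)^{c(e)}[c(e)+1]$ are $O(r)$, and the face weights $|f|_c=\Delta(\cdot)$ together with the prefactor $(\eta')^{2|V|}$ are bounded by fixed powers of $r$; all of these contribute $0$ to $\tfrac{2\pi}{r}\log|\cdot|$ in the limit. Hence the only exponentially growing factor is $\prod_{\Delta\in T}|\Delta|_c$, and the estimate reduces to a uniform asymptotic bound on the modulus of a single admissible quantum $6j$-symbol, $\tfrac{2\pi}{r}\log\bigl|\,\cdot\,\bigr|\le C_0+o(1)$, valid over all admissible $6$-tuples. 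Proving this uniform bound on the $6j$-symbol asymptotics is the first technical heart of the argument.

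Next I would bring in the simplicial volume through the decomposition. Using the relation $TV_r(M,q)=\|RT_r(M,e^{\pi i/r})\|^2$ of Theorem~\ref{TV WRT con B} together with the functoriality of $RT_r$ and Cauchy--Schwarz in the Hermitian space $V_r(T^2)$, a gluing $M=M_1\cup_{T^2}M_2$ gives $TV_r(M,q)\le TV_r(M_1,q)\,TV_r(M_2,q)$, hence $LTV(M)\le LTV(M_1)+LTV(M_2)$; iterating along the decomposition tori yields $LTV(M)\le\sum_{\text{pieces}}LTV(\text{piece})$. For the Seifert fibered (and, more generally, graph-manifold) pieces, Theorem~\ref{1} gives $LTV=0$, matching their vanishing simplicial volume. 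For a hyperbolic piece $N$ with toroidal boundary I would exhibit a combinatorial triangulation whose number of tetrahedra is linearly controlled by volume, $t(\mathcal{T}_N)\le C_1\,\mathrm{Vol}(N)$, obtained from the thick--thin decomposition by covering the thick part with a number of Margulis balls proportional to $\mathrm{Vol}(N)$ and triangulating the finitely many thin parts with boundedly many tetrahedra. Combining with the state-sum bound gives $LTV(N)\le C_0C_1\,\mathrm{Vol}(N)=C_0C_1v_3\,\|N\|$. Summing over pieces and invoking the additivity $v_3\|M\|=\sum_N\mathrm{Vol}(N)$ yields $LTV(M)\le C_0C_1v_3\,\|M\|$, so one may take $C=C_0C_1v_3$.

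I expect the main obstacle to be two-fold. The deepest analytic input is the uniform exponential bound on the $6j$-symbol; without it the state-sum estimate does not even produce a linear-in-$t$ bound. The main topological input is the volume-controlled triangulation of the hyperbolic pieces: a geometric ideal triangulation only gives $t\ge\|N\|$, the wrong direction, so one genuinely needs a combinatorial triangulation whose size is bounded \emph{above} by the volume. This is precisely why the state-sum bound alone is too weak for zero-volume manifolds such as lens spaces (where $t(\mathcal{T})\to\infty$ but $\|M\|=0$), and must be supplemented by the geometric decomposition together with the polynomial-growth input of Theorem~\ref{1}.
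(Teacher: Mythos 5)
This statement is not proved in the paper under review: it is Theorem~1.1 of \cite{detcherry2020gromov}, imported here as an input to the proof of Theorem~\ref{main.A}, so there is no internal proof to compare against. Your sketch correctly reproduces the strategy of the proof in that reference --- the uniform $v_8$-type growth bound on quantum $6j$-symbols yielding $LTV(M)\le C_0\,t(\mathcal{T})$, subadditivity of $LTV$ under torus gluings via the TQFT pairing and Cauchy--Schwarz, the geometric decomposition with Seifert pieces killed by the polynomial bound of Theorem~\ref{1} and the additivity of the Gromov norm, and a triangulation of each hyperbolic piece with tetrahedron count linear in its volume --- and you correctly identify the two genuinely hard inputs ($6j$-symbol asymptotics and volume-controlled triangulations). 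The only steps you elide that must also be addressed are the prime decomposition (so that the JSJ/geometric decomposition applies piecewise) and the Sol pieces; both are routine given your subadditivity step, since Sol manifolds split along a torus fiber into Seifert pieces.
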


\begin{corollary}[Corollary 5.3 \cite{detcherry2020gromov}]
\label{3}
    Let $M'$ be a compact, oriented 3-manifold with non-empty toroidal boundary, and let $M$ be a manifold obtained from $M'$ by Dehn filling some of the boundary components. Then,
    \[TV_r'(M)\leq TV_r'(M'),\] 
    and thus
    \[LTV(M) \leq LTV(M').\]
    
\end{corollary}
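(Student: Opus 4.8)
The plan is to combine the topological quantum field theory (TQFT) gluing axioms for the Witten-Reshetikhin-Turaev functor with the identity $TV_r(\,\cdot\,,q)=\|RT_r(\,\cdot\,,e^{\pi i/r})\|^2$ furnished by Theorem \ref{TV WRT con B}. Since Dehn filling several boundary tori is performed one torus at a time, I would first reduce, by an immediate induction on the number of filled components, to the case where $M$ is obtained from $M'$ by filling a single boundary torus $T\subset\partial M'$; the general statement then follows by iterating the one-torus inequality.

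For the one-torus step, write $\partial M'=T\sqcup\Sigma$, where $\Sigma$ denotes the remaining boundary, so that by the monoidal axiom $V_r(\partial M')\cong V_r(T)\otimes V_r(\Sigma)$ and $RT_r(M')$ is a vector in this space. Filling $T$ means gluing in a solid torus $N$ whose meridian is sent to the chosen filling slope; the associated vector $v:=RT_r(N)\in V_r(T)$ is obtained from the vacuum vector $e_0$ (the invariant of the standardly framed unknotted solid torus) by applying the mapping-class-group element of the torus that realizes the gluing homeomorphism. Because the WRT representation of the mapping class group of $T$ is unitary (the $S$- and $T$-matrices are unitary) and $e_0$ is a unit vector in the standard orthonormal basis $\{e_i\}_{i\in I_r}$ of $V_r(T)$, the vector $v$ is again a unit vector. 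By the gluing axiom, $RT_r(M)$ is the contraction of $RT_r(M')$ against $v$ on the $V_r(T)$-factor; that is, $RT_r(M)=(\langle v,\cdot\rangle\otimes \mathrm{id})\,RT_r(M')\in V_r(\Sigma)$.

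The key estimate is then purely linear-algebraic. The map $V_r(\Sigma)\to V_r(T)\otimes V_r(\Sigma)$, $w\mapsto v\otimes w$, is an isometric embedding because $v$ is a unit vector; its Hermitian adjoint is exactly the contraction $x\mapsto(\langle v,\cdot\rangle\otimes\mathrm{id})x$ computing $RT_r(M)$, and the adjoint of an isometry has operator norm at most $1$. Hence $\|RT_r(M)\|\le\|RT_r(M')\|$. Squaring and invoking Theorem \ref{TV WRT con B} on both $M$ (which has toroidal or empty boundary after filling) and $M'$ gives $TV_r(M)\le TV_r(M')$. Finally, since the Turaev-Viro invariants are non-negative reals and $r>0$, monotonicity of $\log$ yields $\tfrac{2\pi}{r}\log|TV_r(M)|\le\tfrac{2\pi}{r}\log|TV_r(M')|$ for every odd $r$, and passing to the $\limsup$ gives $LTV(M)\le LTV(M')$.

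I expect the main obstacle to be bookkeeping rather than conceptual: one must fix compatible Hermitian and bilinear pairing conventions, together with matching orientation conventions on the glued boundary, across the $SU(2)$ WRT theory and the $SO(3)$ Turaev-Viro theory at the roots $e^{\pi i/r}$ and $e^{2\pi i/r}$, and then verify carefully that the solid-torus filling vector is genuinely the unitary image of the unit vacuum vector, so that no stray normalization factor spoils the unit-norm claim. Once the gluing vector is confirmed to have norm exactly $1$, the adjoint-of-an-isometry (equivalently Cauchy-Schwarz) argument is forced and the inequality is automatic.
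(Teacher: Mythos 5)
Your argument is correct and is essentially the proof given by Detcherry--Kalfagianni for their Corollary 5.3, which the present paper only quotes (as Corollary \ref{3}) without reproving: one contracts $RT_r(M')$ against the norm-one solid-torus vectors (unitary images of the vacuum vector under the mapping class group action on $V_r(T^2)$) and applies Cauchy--Schwarz together with the identity $TV_r=\|RT_r\|^2$ of Theorem \ref{TV WRT con B}. The only point worth making explicit is that $TV_r$ is a non-negative real number, so the squared-norm inequality really is an inequality of Turaev--Viro invariants and passes to $LTV$ after taking $\log$ and $\limsup$.
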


\section{Proof of Theorem \ref{main.A}} 
\label{section 3}
Let $M$ be an oriented Seifert fibered 3-manifold with boundary, described by the symbol \[(\epsilon,g; (a_1, b_1), \ldots, (a_n, b_n)).\] Let $\overline{M}$ be the same manifold $M$, but with the opposite orientation, represented by the symbol \[(\epsilon,g; (a_1, -b_1), \ldots, (a_n, -b_n)),\]
(Chapter 2 \cite{seifert89topology}). Then, the double $D(M)$ of $M$ is obtained by gluing $M$ and $\overline{M}$ together along their common boundary, $M\cup_{\partial M}\overline{M}$. Equivalently, $D(M)$ admits an orientation-reversing involution $i:D(M) \rightarrow {D(M)} $ such that $i(M)=\overline{M}$, $i(\overline{M})=M$ and $i_{|_{\partial M}}=\mathrm{id}$. In particular, gluing can be denoted as $D(M)=M\cup_{i_{|_{\partial M}}}\overline{M}$.

    Gluing $M$ and $\overline{M}$ along all boundary components produces a closed manifold. Since gluing preserves fibers, the exceptional fibers of $M$ remain intact in $D(M)$, and each is duplicated by the corresponding fiber in $\overline{M}$ but with a reversed orientation. Therefore, the total number of exceptional fibers in $D(M)$ is $2n$. At the level of 2-orbifold base, each boundary torus corresponds to a boundary circle on the base. Doubling $M$ corresponds to doubling the 2-orbifold base along its boundary circles, producing a closed 2-orbifold base whose underlying surface has genus $2g$. In particular, we can represent $D(M)$ by the symbol \[(\epsilon,2g; (a_1, b_1), \ldots, (a_n, b_n),(a_1, -b_1), \ldots, (a_n, -b_n)).\]

Before proving Theorem \ref{main.A}, we first prove Lemma \ref{lemma}. Here, we provide a subsequence for odd integer $r\geq3$ for which the lower bounds of the absolute value of Turaev-Viro invariants for oriented Seifert fibered 3-manifold with boundary $M$, and as well as for the double of $M$, are both strictly greater than 1 at $q=e^\frac{2\pi i}{r}$. Recall that, $b_j^*$ denotes any congruence inverse of $b_j$ modulo $a_j$ where $a_j\geq3$, namely $b_j b_j^* \equiv 1 (\text{mod }a_j)$.

\begin{lemma}
\label{lemma}

    Let $M$ be an oriented Seifert fibered 3-manifold described by the symbol 
\[
(\epsilon,g; (a_1, b_1), \ldots, (a_n, b_n)),
\] where $a_j\geq3$ for all $j=1,\ldots,n$ and let $D(M)$ be the double of $M$. Suppose that there is an integer $\gamma > 0$ and $\boldsymbol{\mu}=(\mu_1,\ldots ,\mu_n)$ with $\mu_j=\pm 1$ such that $\gamma+\mu_jb_j^* \equiv 0 (\textit{mod }a_j)$ for $j=1,\ldots, n$. Then, for $r$ divisible by $A:=\text{lcm}(a_1,\ldots ,a_n)$, \[|TV_r'(D(M),e^{\frac{2\pi i}{r}})|>1,\] and \[|TV_r'(M,e^{\frac{2\pi i}{r}})|>1.\]  
\end{lemma}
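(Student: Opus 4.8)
The plan is to reduce both inequalities of Lemma \ref{lemma} to a single lower bound on the closed-manifold invariant $RT_r(D(M),e^{\pi i/r})$, and then to extract that bound from the explicit Seifert formula of Lemma \ref{lemma seifert}. First I would observe that a Seifert fibered $M$ with boundary has toroidal boundary, so $\chi(M)=0$. Theorem \ref{TV WRT con A} then gives $TV_r(M,q)=RT_r(D(M),e^{\pi i/r})$, and since the left-hand side is a nonnegative real number, $RT_r(D(M),e^{\pi i/r})$ is itself real and $\geq 0$. Applying Theorem \ref{TV WRT con B} to the closed manifold $D(M)$ gives $TV_r(D(M),q)=|RT_r(D(M),e^{\pi i/r})|^2=TV_r(M,q)^2$. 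Hence it suffices to prove the single statement $RT_r(D(M),e^{\pi i/r})>1$, from which both displayed inequalities follow at once.

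Next I would apply Lemma \ref{lemma seifert} to $D(M)$, whose symbol has the $2n$ exceptional pairs $(a_j,\pm b_j)$ and genus $2g$. Two arithmetic simplifications are decisive: first, $e(D(M))=-\sum_j b_j/a_j-\sum_j(-b_j)/a_j=0$, so $\mathrm{sgn}(e(D(M)))=0$ and the quadratic $\gamma$-phase both disappear; second, for $r$ divisible by $A=\mathrm{lcm}(a_1,\dots,a_n)$ each exponent $\tfrac{2\pi r m_j^2 b_j^*}{a_j}$ is an integer multiple of $2\pi$, so the quadratic Gauss-type factors in the inner $\boldsymbol m$-sum trivialize. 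The inner sum then factors into character sums $\sum_{m_j\in\mathbb Z/a_j\mathbb Z} e^{-2\pi i m_j(\gamma+\mu_j\tilde b_j^*)/a_j}$, each equal to $a_j$ when $a_j\mid(\gamma+\mu_j\tilde b_j^*)$ and $0$ otherwise, where $\tilde b_j^*$ is the relevant congruence inverse. Thus a term survives precisely when all congruences hold, and then the inner sum equals $\prod_{j=1}^n a_j^2$.

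The crux — and the step I expect to be the main obstacle — is ruling out cancellation among the surviving terms of $Z_{(\epsilon,r)}(D(M))$, which a priori carry phases coming from $\prod_j \mu_j e^{-i\pi\gamma\mu_j/(a_jr)}$. Here the doubling is essential: the fibers occur in the matched pairs $(a_j,b_j),(a_j,-b_j)$, so for a surviving $\gamma$ the congruences force the two signs of a pair to be opposite (when $a_j>2$), whence the phase factors $e^{-i\pi\gamma\mu_j/(a_jr)}$ of the pair cancel and the pair contributes the real number $-1$; the case $a_j=2$ contributes the real negative number $-4\sin^2(\pi\gamma/2r)$. Consequently \emph{every} surviving term of $Z$ carries the same overall sign $(-1)^n$, so that $Z_{(\epsilon,r)}(D(M))=(-1)^n\cdot(\text{sum of positive terms})$ and no cancellation can occur. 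This uniform-sign phenomenon, produced by the doubling, is exactly what makes a clean lower bound possible.

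Finally I would produce the bound. The hypothesis supplies $\gamma_0>0$ and $(\mu_1,\dots,\mu_n)$ with $\gamma_0+\mu_jb_j^*\equiv 0 \pmod{a_j}$; then $(\gamma_0,\mu_1,\dots,\mu_n,-\mu_1,\dots,-\mu_n)$ satisfies all $2n$ congruences for $D(M)$, so at least one term survives, and one may take $\gamma_0\in\{1,\dots,A\}$, for which $\sin(\pi\gamma_0/r)$ is of order $1/r$. Since all surviving terms share the sign $(-1)^n$, $|Z_{(\epsilon,r)}(D(M))|$ is bounded below by this one term, of size comparable to $\big(\prod_j a_j\big)^2\,\sin^{-(2n+2a_\epsilon g-2)}(\pi\gamma_0/r)$; multiplying by the modulus of the prefactor, namely $r^{a_\epsilon g-1}\big/\big(2^{2n+a_\epsilon g-1}\prod_j a_j\big)$, shows that $RT_r(D(M),e^{\pi i/r})$ grows polynomially in $r$ along the subsequence of (odd) $r$ divisible by $A$, and in particular exceeds $1$ for such $r$. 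By the reduction in the first paragraph, this yields $|TV_r(D(M),q)|>1$ and $|TV_r(M,q)|>1$, completing the proof.
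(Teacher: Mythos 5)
Your proposal is correct and follows essentially the same route as the paper: both apply the Hansen--Takata formula of Lemma \ref{lemma seifert} to $D(M)$, use $e(D(M))=0$ and $A\mid r$ to reduce the inner sums to character sums, observe that the doubling forces the surviving $\boldsymbol{\mu}$'s to pair up so that all phases cancel and every surviving term carries the sign $(-1)^n$, and then convert a lower bound on $Z_{(\epsilon,r)}(D(M))$ into the two $TV_r$ bounds via Theorems \ref{TV WRT con A} and \ref{TV WRT con B}. The only differences are cosmetic: you bound $|Z|$ by a single surviving term of polynomial size rather than by counting $2k$ terms each of modulus at least $1$, and your explicit treatment of the $a_j=2$ case (where the paired signs need not be opposite but the pair still contributes the real number $-4\sin^2(\pi\gamma/2r)$) is in fact slightly more careful than the paper's.
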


\begin{proof}We begin by computing the expression $Z_{(\epsilon,r)}(M,e^\frac{\pi i}{2r})$ in Lemma \ref{lemma seifert} for a closed, oriented Seifert fibered 3-manifold $D(M)$. In this computation, since the Euler number of $D(M)$ is zero, the exponential term $e^ \frac{i \pi e(M) \gamma^2}{2r}$ vanishes; likewise, if $A$ divides $r$, the exponential term $e^\frac{2\pi rm_j^2b_j^*}{a_j}$ can be simplified to the value 1. Additionally, the genus of $D(M)$ is twice that of $M$. We denote the computed expression for $D(M)$ as $Z_{(\epsilon,r)}(D(M),e^\frac{\pi i}{2r})$, where $(\gamma,\boldsymbol{\mu},\boldsymbol{m})$ ranges over $\{1,\ldots,r-1\}\times\{\pm1\}^{2n}\times \mathbb{Z}/a_1\mathbb{Z}\times \ldots \times \mathbb{Z}/a_n\mathbb{Z}\times \mathbb{Z}/a_1\mathbb{Z} \times \ldots \mathbb{Z}/a_n\mathbb{Z}$. 

 Notice that the exceptional fibers in $D(M)$ for $j=n+1,\ldots,2n$ are represented by $(a_j,-b_j)$. These are identical to the exceptional fibers for $j=1,\ldots,n$ but are negatively oriented. Consequently, the $j$-th product that ranges over $\{1,\ldots, 2n\}$ in $Z_{(\epsilon,r)}(D(M),e^\frac{\pi i}{2r})$ can be separated into two distinct products: one that ranges over $\{1,\ldots, n\}$, and another that ranges over $\{n+1,\ldots, 2n\}$. Thus, the sum $Z_{(\epsilon,r)}(D(M),e^\frac{\pi i}{2r})$ can be rearranged as follows.

\begin{equation}
\begin{split} Z_{(\epsilon ,r)}(D(M),e^\frac{\pi i}{2r})& = \sum_{(\gamma, \boldsymbol{\mu})} \left \{\frac{(-1)^{2\gamma a_\epsilon g}\prod_{j=1} ^{2n} \left(\mu_j e^{ {  \frac{ -i \pi \gamma\mu_j}{a_j r}}}\right)}{\mathrm{sin}^{2n+2a_\epsilon g-2}(\pi \gamma / r)}  \cdot \sum_{\boldsymbol{m}}\prod_{j=1}^{2n} e^ {-i \cdot \left(\frac{2 \pi m_j(\gamma + \mu_jb_j^*)}{a_j} \right)} \right \} \notag \\
                         &= \sum_{(\gamma, \boldsymbol{\mu})} \Bigg \{\frac{\prod_{j=1} ^n (\mu_j \mu_{n+j}) e^{-i \pi \gamma \cdot {\sum_{j=1}^n  \frac{ (\mu_j +\mu_{n+j})}{a_j r}}}}{\mathrm{sin}^{2n+2a_\epsilon g-2}(\pi \gamma / r)} \cdot\\  &\sum_{\boldsymbol{m}}\prod_{j=1}^n e^ {-i \cdot \left(\frac{2 \pi m_j(\gamma + \mu_jb_j^*)}{a_j} \right)} \cdot  e^ {-i \cdot \left(\frac{2 \pi m_j(\gamma - \mu_{n+j}b_j^*)}{a_j} \right)}\Bigg \} \notag \\
                         &= \sum_{(\gamma, \boldsymbol{\mu})} \Bigg \{\frac{\prod_{j=1} ^n (\mu_j \mu_{n+j}) e^{-i \pi \gamma \cdot {\sum_{j=1}^n  \frac{ (\mu_j +\mu_{n+j})}{a_j r}}}}{\mathrm{sin}^{2n+2a_\epsilon g-2}(\pi \gamma / r)}  \cdot \notag\\
                           \end{split} 
\end{equation}
 
 \begin{equation}
        \hspace{25pt}\prod_{j=1}^n\sum_{m_j} e^ {-i \cdot \left(\frac{2 \pi m_j(\gamma + \mu_jb_j^*)}{a_j} \right)} \cdot \prod_{j=1}^n\sum_{m_j} e^ {-i \cdot \left(\frac{2 \pi m_j(\gamma - \mu_{n+j}b_j^*)}{a_j} \right)}\Bigg \} \label{eq;3.1}   
 \end{equation}

For any fixed $(\gamma,\boldsymbol{\mu})=(\gamma,(\mu_1,\ldots,\mu_{2n}))$, we have  
\[\sum_{m_j\in {\mathbb{Z}/{a_j\mathbb{Z}}}} e^ {-i \cdot \left(\frac{2 \pi m_j(\gamma + \mu_jb_j^*)}{a_j} \right)} = \begin{cases}
    a_j & \text{if } \gamma + \mu_j b_j^* \equiv 0(\text{mod } a_j)  \\
    
    0 & \text{otherwise}
\end{cases} \text{    }   \]

and

\[\sum_{m_j\in {\mathbb{Z}/{a_j\mathbb{Z}}}} e^ {-i \cdot \left(\frac{2 \pi m_j(\gamma - \mu_{n+j}b_j^*)}{a_j} \right)} = \begin{cases}
    a_j & \text{if }  \gamma - \mu_{n+j} b_j^* \equiv0 (\text{mod } a_j) \\
    
    0 & \text{otherwise}
    
\end{cases}\\ \text{ }   \]for all $j=1,\ldots,n$. As a result, the summand corresponding to $(\gamma,\boldsymbol{\mu})=(\gamma,(\mu_1,\ldots,\mu_{2n}))$ in the simplified expression $Z_{(\epsilon ,r)}(D(M),e^\frac{\pi i}{2r})$ in \eqref{eq;3.1} is nonzero if and only if the following congruences hold for all $j=1,\ldots,n$:
\begin{equation}
    \gamma + \mu_j b_j^* \equiv 0(\text{mod } a_j)\label{eq;3.2}
\end{equation}
and
\begin{equation}
    \gamma - \mu_{n+j} b_j^* \equiv 0(\text{mod } a_j). \label{eq;3.3}
\end{equation}
In order for the sum $Z_{(\epsilon ,r)}(D(M),e^\frac{\pi i}{2r})$ to be nonzero, there must exist some $\gamma$ that satisfy the set of congruences \eqref{eq;3.2} and \eqref{eq;3.3} for some $\boldsymbol{\mu}$.

Recall that throughout, we assume $a_j \geq 3$ for all $j=1,\dots,n$. Subtracting \eqref{eq;3.2} from \eqref{eq;3.3}, we obtain
\[
(\mu_j+\mu_{n+j})\,b_j^* \equiv 0 (\text{mod } a_j).
\]
Multiplying both sides by $b_j$ yields
\[
\mu_j + \mu_{n+j} \equiv 0 (\text{mod } a_j).
\]
Because $a_j \neq 2$, this congruence forces the unique solution
\[
\mu_{n+j} \equiv -\mu_j (\text{mod } a_j),
\]
as required.

This means that the vectors of $\boldsymbol{\mu}$ which contribute to the existence of $\gamma$ must be of the form $(\mu_1,\ldots,\mu_n,-\mu_1,\ldots,-\mu_n)$. In other words, the expression $Z_{(\epsilon ,r)}(D(M),e^\frac{\pi i}{2r})$ is nonzero if and only if there exist a solution $\gamma\in \{1,\ldots,A-1\}$ for $\gamma + \mu_j b_j^* \equiv 0(\text{mod } a_j)$ for $j=1,\ldots,n$ where $\boldsymbol{\mu}$ are of the form $(\mu_1,\ldots,\mu_n,-\mu_1,\ldots,-\mu_n)$.

 Define $\mathcal{C}$ as the set of all vectors $\boldsymbol{\mu}$ of the form $(\mu_1,\ldots,\mu_n,-\mu_1,\ldots,-\mu_n)$. Let 
$\mathcal{B}$ be the set of all solutions for \(\gamma \in \{1,\ldots,A-1\}\) obtained by solving $\gamma + \mu_j b_j^* \equiv 0(\text{mod } a_j)$ for all $j=1,\ldots,n$ for each $\boldsymbol{\mu}\in \mathcal{C}$. That is,
\[\mathcal{B}=\{(\gamma,\boldsymbol{\mu}) | \gamma+\mu_jb_j^*\equiv 0 (\text{mod }a_j) \text{ for } j=1,\ldots,n\text{ and }\boldsymbol{\mu}\in \mathcal{C}  \}.\] 

By the above observations, we can simplify the sum $Z_{(\epsilon,r)}(D(M),e^\frac{\pi i}{2r})$ in \eqref{eq;3.1} furthermore as follows.

\[\begin{split}Z_{(\epsilon ,r)}(D(M),e^\frac{\pi i}{2r}) &= \sum_{\mathcal{B}} \left \{\frac{\prod_{j=1} ^n (\mu_j \mu_{n+j}) e^{-i \pi \gamma \cdot {\sum_{j=1}^n  \frac{ (\mu_j+ \mu_{n+j})}{a_j r}}}}{\mathrm{sin}^{2n+2a_\epsilon g-2}(\pi \gamma / r)}  \cdot \prod_{j=1}^n a_j \cdot \prod_{j=1}^n a_j\right\}\\
                   &=  \left(\prod_{j=1}^n a_j\right)^2 \cdot\sum_{\mathcal{B}} \left \{\frac{\prod_{j=1} ^n (-\mu_j ^2) e^{-i \pi \gamma \cdot {\sum_{j=1}^n  \frac{ (\mu_j -\mu_{j})}{a_j r}}}}{\mathrm{sin}^{2n+2a_\epsilon g-2}(\pi \gamma / r)}  \right\}\\
                    \end{split}\]
 \begin{equation}
     \hspace{-3pt}= \left(\prod_{j=1}^n a_j\right)^2 \cdot\sum_{\mathcal{B}} \left \{\frac{ (-1)^n }{\mathrm{sin}^{n+2a_\epsilon g-2}(\pi \gamma / r)}  \right\}\label{eq;3.4}
 \end{equation}

 According to the assumption, $r$ is divisible by $A$. In particular, if $r=A$, the simplified sum $Z_{(\epsilon,r=A)}(D(M),e^\frac{\pi i}{2A})$ in \eqref{eq;3.4} contains at least two nonzero summands. The corresponding $(\gamma,\boldsymbol{\mu)}$ pairs where $\gamma \in \{1,2,\ldots, A-1\}$ and $\boldsymbol{\mu }\in \mathcal{C}$ are as follows.

    \[(\gamma_1,\boldsymbol{\mu})=(\gamma,(\mu_1,\ldots,\mu_n,-\mu_1,\ldots,-\mu_n))\]
    and
    \[(\gamma_2,-\boldsymbol{\mu})=(A-\gamma,(-\mu_1,\ldots,-\mu_n,\mu_1,\ldots,\mu_n)).\]

Now, if $r=kA$ where $k$ is a positive integer, there exist at least $2k$ number of nonzero summands that contribute to the simplified sum $Z_{(\epsilon,r=kA)}(D(M),e^\frac{\pi i}{2kA})$ in \eqref{eq;3.4}. Therefore, the corresponding $(\gamma,\boldsymbol{\mu)}$ pairs are $(pA+\gamma,\boldsymbol{\mu})$, and $(pA+A-\gamma,-\boldsymbol{\mu})$ for $p=0,\ldots,k-1$, where $\gamma \in \{1,2,\ldots, A-1\}$ and $\boldsymbol{\mu }\in \mathcal{C}$. Consequently, the expression $Z_{(\epsilon,r)} (D(M),e^\frac{\pi i}{2r})$ (in \eqref{eq;3.4}) can be restated as  $Z_{(\epsilon ,kA)}(D(M),e^\frac{\pi i}{2kA}) $ for $r=kA$ where $\gamma \in \{1,2,\ldots, A-1\}$ and $\boldsymbol{\mu }\in \mathcal{C}$ as follows.

\begin{equation}
\begin{split}&Z_{(\epsilon ,kA)}(D(M),e^\frac{\pi i}{2kA})  =\left(\prod_{j=1}^n a_j\right)^2\cdot\sum_{\mathcal{B}}  \sum_{p=0}^{k-1} \left \{\frac{(-1)^n }{\mathrm{sin}^{2n+2a_\epsilon g-2}(\frac{\pi (pA+\gamma)}{kA} ) } +\frac{(-1)^n }{\mathrm{sin}^{2n+2a_\epsilon g-2}(\frac{\pi (pA+A-\gamma)}{kA})  }\right\}\\
                       &= (-1)^n\left(\prod_{j=1}^n a_j\right)^2\cdot \sum_{\mathcal{B}}\sum_{p=0}^{k-1}  \Bigg\{\frac{1 }{\mathrm{sin}^{2n+2a_\epsilon g-2}(\frac{\pi (pA+\gamma)}{kA} ) } 
                    +\frac{1}{\mathrm{sin}^{2n+2a_\epsilon g-2}(\frac{\pi (pA+A-\gamma)}{kA})  }\Bigg\}  \label{eq; 3.5}
\end{split}
\end{equation}
\\
%%%%%%%

Using Lemma \ref{lemma seifert}, we then compute the value of the corresponding $RT_r$ invariants for the 3-manifold $D(M)$ at 
$ e^{\frac{\pi i}{2r}}$ with $r = kA$ as follows:
\begin{equation}
     RT_{(\epsilon ,r=kA)}(D(M),e^\frac{\pi i}{2kA})=\frac{(-1)^{2a_\epsilon g}(kA)^{a_\epsilon g-1} i^{2n} Z_{(\epsilon,r=kA)}(D(M),e^{\frac{\pi i}{2kA}})}{2^{2n+a_\epsilon g-1}\sqrt{\left(\prod_{j=1}^n a_j\right)^2}}.  \label{eq 3.7}\end{equation}

Next, by Theorem \ref{TV WRT con B} and (\ref{eq 3.7}), the $TV_r$ invariant for $D(M)$ at $e^{\frac{\pi i}{r}}$ for $r=kA$ is given by

 \begin{equation}TV_{r=kA}(D(M),e^\frac{\pi i}{r})=||RT_{(\epsilon,r=kA)}(D(M),e^\frac{\pi i}{2r})||^2 = \left| \frac{(-1)^{2a_\epsilon g}(kA)^{a_\epsilon g-1} i^{2n} Z_{(\epsilon,r=kA)}(D(M),e^{\frac{\pi i}{2kA}})}{2^{2n+a_\epsilon g-1}\sqrt{\left(\prod_{j=1}^n a_j\right)^2}} \right|^2. \label{11}\end{equation}

However, since our focus is on odd integer $r\geq3$ and $q = e^{\frac{2\pi i}{r}}$, in particular when $r = kA$ for an odd positive integer $k$, we first use Lemma \ref{tv lemma} to compute the required $TV_r'$ invariant at $e^\frac{(r-1)\pi i}{r}$ as follows.

    \begin{equation}
        TV_{r=kA}'(D(M),e^\frac{(r-1)\pi i}{r})=\frac{TV_{r=kA}(D(M),e^\frac{\pi i}{r})}{TV_3(D(M),e^\frac{\pi i}{3})}
    \label{21}\end{equation}

Then, applying Galois conjugacy (transforming $e^{\frac{(r-1)\pi i}{r}} \mapsto e^{\frac{2\pi i}{r}}$ or $e^{\frac{\pi i}{r}} \mapsto e^{\frac{2\pi i}{r}}$) to (\refeq{21}) together with \eqref{11}, and using the fact that $TV_3(D(M),e^\frac{2\pi i}{3})=2^{b_2-b_0}$ for a closed 3-manifold $D(M)$, we obtain the $TV_r'$ invariant at $q = e^{\frac{2\pi i}{r}}$ as follows.

Note also that we must distinguish between the two possible transformations, depending on the parity of $r$:  
\[
\begin{cases}
e^{\frac{(r-1)\pi i}{r}} \;\mapsto\; e^{\frac{2\pi i}{r}}, & \text{if $r$ is odd (so $r-1$ is even)}, \\[6pt]
e^{\frac{\pi i}{r}} \;\mapsto\; e^{\frac{2\pi i}{r}}, & \text{if $r$ is odd (i.e.\ $r \equiv 1 \pmod{2}$)}.
\end{cases}
\]

\begin{equation}
    TV_{r=kA}'(D(M),q=e^\frac{2\pi i}{r})        = \left|\frac{(kA)^{a_\epsilon g-1}(-1)^nZ_{(\epsilon,r=kA)}(D(M),e^{\frac{\pi i}{kA}})}{2^{2n-a_\epsilon g-1}\cdot 2^{b_2-b_0}\left(\prod_{j=1}^n a_j\right)}\right|^2 \label{TV'dm}
   \end{equation}

\vspace{1pt}

   A similar argument applies to $M$: by Theorem \ref{TV WRT con A}, and since $\chi(M) = 0$, the $TV_r$ invariant of $M$ at $e^{\frac{\pi i}{r}}$ for $r = kA$ reduces to

 \begin{equation}
    TV_{r=kA}(M,e^\frac{\pi i}{r})=\eta^{-\chi(M)}RT_{(\epsilon,r=kA)}(D(M),e^{\frac{\pi i}{2r}})        = \frac{(-1)^{2a_\epsilon g}(kA)^{a_\epsilon g-1} i^{2n} Z_{(\epsilon,r=kA)}(D(M),e^{\frac{\pi i}{2kA}})}{2^{2n+a_\epsilon g-1}\sqrt{\left(\prod_{j=1}^n a_j\right)^2}}. 
   \label{12} \end{equation}

    Then, applying the same Galois conjugacy (transforming $e^{\frac{(r-1)\pi i}{r}} \mapsto e^{\frac{2\pi i}{r}}$ or $e^{\frac{\pi i}{r}} \mapsto e^{\frac{2\pi i}{r}}$) to (\ref{12}) and together with $$TV_{r=kA}'(M,e^\frac{(r-1)\pi i}{r})=\frac{TV_{r=kA}(M,e^\frac{\pi i}{r})}{TV_3(M,e^\frac{\pi i}{3})},$$ and since $TV_3(M,e^\frac{2\pi i}{3})=2^{b_2}$ for $M$ ($\partial M \neq \emptyset$), we get the $TV_r'$ invariant at $q = e^{\frac{2\pi i}{r}}$ as follows.
    
    \begin{equation}
        TV_{r=kA}'(M,q=e^\frac{2\pi i}{r})= \frac{(kA)^{a_\epsilon g-1}(-1)^nZ_{(\epsilon,r=kA)}(D(M),e^{\frac{\pi i}{kA}})}{2^{2n-a_\epsilon g-1}\cdot 2^{b_2}\left(\prod_{j=1}^n a_j\right)}  \label{TV'm}\end{equation}

Now, observe that the lower bounds of the absolute values of both \eqref{TV'dm} and \eqref{TV'm} depend on evaluating the lower bound of $|Z_{(\epsilon,r=kA)}(D(M),e^{\frac{\pi i}{kA}})|.$ 
\vspace{1pt}

Since $p$ and $\gamma$ ranges over $\{0,\ldots,k-1\}$ and $\{1,2,\ldots, A-1\}$ respectively, both integers $pA+\gamma$ and $pA+A-\gamma$ are less than $kA$.
Therefore, \[0<\frac{2\pi (pA+\gamma)}{kA},\frac{2\pi (pA+A-\gamma)}{kA}<2\pi.\] For $p=1,\ldots ,{(k-1)}/4$, the sine functions, $\text{sin}\left(\frac{2\pi (pA+\gamma)}{kA}\right)$, and $\text{sin}\left(\frac{2\pi (pA+A-\gamma)}{kA}\right)$ increase and take the values between 0 and 1. Conversely, for $p=(k-1)/4+1,\ldots ,(k-1)/2$, these sine functions decrease but still remain within the range between 0 and 1. A similar behavior occurs for $p=(k-1)/2+1,\ldots ,(k-1)$, but take values in between 0 and -1. However, since both sine functions are raised to even powers, their contributions are nonnegative, and it is enough to restrict attention to positive values. Therefore, this ensures that the reciprocals of $\text{sin}\left(\frac{2\pi (pA+\gamma)}{kA}\right)$, and $\text{sin}\left(\frac{2\pi (pA+A-\gamma)}{kA}\right)$ raised to the power $2n+2a_\epsilon g-2$ are strictly greater than 1 for $\gamma \in \{1,\ldots,A-1\}$ and $p=0,1,\ldots, k-1$. Also, let $|\mathcal{B}|$ be the number of elements in $\mathcal{B}$ for $r=kA$. Using these estimates, the lower bound of the absolute value of the expression $Z_{(\epsilon ,kA)}(D(M),e^\frac{\pi i}{kA})$ in \eqref{eq; 3.5} for $\gamma \in \{1,\ldots,A-1\}$ and $p=0,1,\ldots, k-1$ is computed as follows. 

\begin{equation}
\begin{split}|&Z_{(\epsilon,kA)}(D(M),e^{\frac{\pi i}{kA}})|\\   &=\Bigg|(-1)^n\left(\prod_{j=1}^n a_j\right)^2\sum_\mathcal{B}\sum_{p=0}^{k-1} \Bigg \{\frac{1 }{\mathrm{sin}^{2n+2a_\epsilon g-2}\left(\frac{2\pi (pA+\gamma)}{kA} \right) } +\frac{1}{\mathrm{sin}^{2n+2a_\epsilon g-2}\left(\frac{2\pi (pA+A-\gamma)}{kA}\right)  }\Bigg\}\Bigg|\\
                  &\geq\left|\left(\prod_{j=1}^n a_j\right)^2\cdot\sum_\mathcal{B}\sum_{p=0}^{k-1} \left (1   +1  \right)\right|\\
                  &= 2|\mathcal{B}|k\left(\prod_{j=1}^n a_j\right)^2  \\ 
                  \end{split}\label{eq 3.6}\end{equation}

Now, by using the lower bound estimate provided in \eqref{eq 3.6}, and the formula \eqref{TV'dm}, a lower bound for the absolute value of $TV_r'$ invariants of $D(M)$ at $e^\frac{2\pi i}{r}$ for $r=kA$ can be computed as follows.

 \[\begin{split}|TV_{r=kA}'(D(M),q=e^\frac{2\pi i}{kA})|
                                            &=\left|\frac{(kA)^{a_\epsilon g-1}(-1)^nZ_{(\epsilon,r=kA)}(D(M),e^{\frac{\pi i}{kA}})}{2^{2n-a_\epsilon g-1}\cdot 2^{b_2-b_0}\left(\prod_{j=1}^n a_j\right)} \right|^2\\
                                            &\geq \left|\frac{(kA)^{a_\epsilon g-1}2|\mathcal{B}|k \left(\prod_{j=1}^n a_j\right)}{2^{2n+a_\epsilon g-1}2^{b_2-b_0}}  \right|^2\\
                                            &>1.
  \end{split} \]
   The last inequality is true when $k$ is taken appropriately large.

Next, we compute the lower bound for the absolute value of the $TV_r$ invariants of $M$ with boundary at $q=e^\frac{2\pi i}{r}$ for $r=kA$ using the estimate \eqref{eq 3.6} and formula \eqref{TV'm}. Moreover, the last inequality is true for appropriately large $k$. 

\[\begin{split}|TV_{r=kA}'(M,q=e^\frac{2\pi i}{kA})|
                                            &=\left|\frac{(kA)^{a_\epsilon g-1}(-1)^nZ_{(\epsilon,r=kA)}(D(M),e^{\frac{\pi i}{kA}})}{2^{2n-a_\epsilon g-1}\cdot 2^b\left(\prod_{j=1}^n a_j\right)} \right|\\
                                            &\geq \left|\frac{(kA)^{a_\epsilon g-1}2|\mathcal{B}|k \left(\prod_{j=1}^n a_j\right)}{2^{2n+a_\epsilon g-1}2^{b_2}}  \right|\\
                                            &>1.
  \end{split} \]

\end{proof}

We now prove Theorem \ref{main.A} of Introduction.\\

\newcommand{\theoremname}{Theorem \ref{main.A}}
\begin{theorem-rep}
Let $M$ be an oriented Seifert fibered 3-manifold with boundary, described by the symbol 
\[
(\epsilon,g; (a_1, b_1), \ldots, (a_n, b_n)).
\] where each $a_j\geq3$ and gcd$(a_j,b_j)=1$ for all $j=1\ldots,n$. Suppose that there is an integer $\gamma > 0$ and $\boldsymbol{\mu}=(\mu_1,\ldots ,\mu_n)$ with $\mu_j=\pm 1$ such that \[\gamma+\mu_jb_j^* \equiv 0 (\text{mod }a_j)\] for $j=1,\ldots,n$.
Then, $M$ and $D(M)$ satisfy Conjecture \ref{con:TV volume}.

That is, $LTV(M)=LTV(D(M))=0$.
\end{theorem-rep}

\begin{proof}Suppose there is an integer $\gamma > 0$ and $\boldsymbol{\mu}=(\mu_1,\ldots ,\mu_n)$ with $\mu_j=\pm 1$ such that $\gamma+\mu_jb_j^* \equiv 0 (\text{mod }a_j)$ for $j=1,\ldots, n$. Then by Lemma \ref{lemma}, for $r$ divisible by $A=\text{lcm}(a_1,\ldots ,a_n)$, in particular, for $r=kA$ where $k$ ranges over odd positive integers,

\[|TV_{kA}'(D(M),q=e^\frac{2\pi i}{kA})| \geq \left|\frac{(kA)^{a_\epsilon g-1}2|\mathcal{B}|k \left(\prod_{j=1}^n a_j\right)}{2^{2n+a_\epsilon g-1}2^{b_2-b_0}}  \right|^2 >1,\] and 
\[|TV_{kA}'(M,q=e^\frac{2\pi i}{kA})| \geq \left|\frac{(kA)^{a_\epsilon g-1}2|\mathcal{B}|k \left(\prod_{j=1}^n a_j\right)}{2^{2n+a_\epsilon g-1}2^{b_2}}  \right| >1.\]
                                            
Therefore, for $k$ that runs over all odd positive integers, we have

\[\begin{split}LTV(D(M))&=\underset{k \xrightarrow{}\infty}{\text{lim sup}}\frac{2\pi}{kA}\text{log}|TV_{r=kA}'(D(M),q)|\\
                        &\geq \underset{k \xrightarrow{}\infty}{\text{lim sup}}\frac{2\pi}{kA}\text{log}\left|\frac{(kA)^{a_\epsilon g-1}2|\mathcal{B}|k \left(\prod_{j=1}^n a_j\right)}{2^{2n+a_\epsilon g-1}2^{b_2-b_0}}  \right|^2\\
                        &=0.\end{split}\]
Similarly, \[LTV(M)\geq \underset{k \xrightarrow{}\infty}{\text{lim sup}}\frac{2\pi}{kA}\text{log}\left|\frac{(kA)^{a_\epsilon g-1}2|\mathcal{B}|k \left(\prod_{j=1}^n a_j\right)}{2^{2n+a_\epsilon g-1}2^{b_2}}  \right|=0.\] Since the Gromov norm of Seifert fibered 3-manifolds is zero, we have $\|M\|=\|D(M)\|=0 $. By Theorem \ref{1}, it follows that \[LTV(D(M)),LTV(M)\leq 0.\] 
  Therefore,\[LTV(M)=LTV(D(M))=v_3||D(M)||=v_3||M||=0,\] where $k$ runs over all odd integers.

Thus, both $M$ and $D(M)$ satisfy Conjecture \ref{con:TV volume}.\\

\end{proof}

\section{Some examples}
\label{section 4}

In this section, we discuss particular families of oriented Seifert fibered 3-manifolds where Theorem \ref{main.A} applies.

\renewcommand{\theoremname}{Corollary \ref{coro A}}
\begin{corollary*} Let $M$ be an oriented Seifert fibered 3-manifold with boundary, described by the symbol 
\[
(\epsilon,g;(a_1, b_1), \ldots, (a_n, b_n))
,\] where each $a_j\geq3$ and gcd$(a_j,b_j)=1$ for all $j=1\ldots,n$. If either
\begin{enumerate}
    \item [(a)] $a_1,\ldots , a_n$ are coprime; or
    \item [(b)] $a_1=\ldots =a_n=a$ and there are $\mu_1,\ldots,\mu_n$ such that $\mu_1 b_1^* \equiv \ldots \equiv \mu_n b_n^* \pmod{a}$,
\end{enumerate} then $M$ and $D(M)$ satisfy Conjecture \ref{con:TV volume}.
\end{corollary*}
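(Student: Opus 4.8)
The plan is to reduce both cases to the hypothesis of Theorem~\ref{main.A}. That hypothesis asks only for the existence of an integer $\gamma>0$ together with a sign vector $\boldsymbol{\mu}=(\mu_1,\ldots,\mu_n)$, $\mu_j=\pm1$, solving the simultaneous congruences
\[
\gamma+\mu_jb_j^*\equiv 0 \pmod{a_j},\qquad j=1,\ldots,n.
\]
Once such a pair $(\gamma,\boldsymbol{\mu})$ is exhibited, Theorem~\ref{main.A} applies verbatim and delivers $LTV(M)=LTV(D(M))=0$, which is exactly Conjecture~\ref{con:TV volume} for $M$ and $D(M)$. So the entire content of the corollary is the solvability of this congruence system under hypotheses (a) and (b), and I would state explicitly at the outset that this is all that must be checked.

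For case (a), I would invoke the Chinese Remainder Theorem. I fix the simplest sign vector $\boldsymbol{\mu}=(1,\ldots,1)$, so that the system becomes $\gamma\equiv -b_j^*\pmod{a_j}$ for $j=1,\ldots,n$. Since the $a_j$ are (pairwise) coprime, the CRT produces a solution $\gamma$, unique modulo $A=\text{lcm}(a_1,\ldots,a_n)=\prod_j a_j$; replacing $\gamma$ by $\gamma+A$ if needed makes it strictly positive. Note that $b_j^*$ is a unit modulo $a_j$, so for each $j$ with $a_j>1$ the solution automatically has $\gamma\not\equiv 0\pmod{a_j}$, matching the range $\gamma\in\{1,\ldots,A-1\}$ used in Lemma~\ref{lemma}. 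This verifies the hypothesis of Theorem~\ref{main.A}.

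For case (b), all moduli coincide, $a_1=\cdots=a_n=a$, and the hypothesis hands me signs $\mu_1,\ldots,\mu_n$ with $\mu_1b_1^*\equiv\cdots\equiv\mu_nb_n^*\pmod a$. I would then take $\gamma$ to be any positive integer with $\gamma\equiv -\mu_1 b_1^*\pmod a$. Because the residues $\mu_jb_j^*$ all agree modulo $a$, this single congruence simultaneously forces $\gamma+\mu_jb_j^*\equiv 0\pmod a$ for every $j$, which is precisely the required condition since $a_j=a$. Positivity is again arranged by adding a multiple of $a$, and these $\gamma$ and $\boldsymbol{\mu}$ feed directly into Theorem~\ref{main.A}.

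The mathematics here is elementary, so there is no analytic or topological obstacle; the only point demanding care is the meaning of ``relatively coprime'' in (a). The argument as sketched uses \emph{pairwise} coprimality, which is exactly what makes the CRT applicable for a single fixed sign vector. If one assumed merely $\gcd(a_1,\ldots,a_n)=1$, the system need not be solvable for a fixed $\boldsymbol{\mu}$ (the generalized CRT consistency conditions $\mu_ib_i^*\equiv \mu_jb_j^*\pmod{\gcd(a_i,a_j)}$ could fail), and one would be forced to exploit the freedom in the choice of the $\mu_j$. I would therefore flag this as the subtle step and make the intended pairwise-coprime hypothesis explicit in the statement, so that the clean CRT argument above goes through.
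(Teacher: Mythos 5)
Your proposal is correct and follows essentially the same route as the paper: both reduce the corollary to exhibiting a solution $(\gamma,\boldsymbol{\mu})$ of the congruence system in Theorem~\ref{main.A}, using the Chinese Remainder Theorem for case (a) and the assumed compatibility $\mu_1b_1^*\equiv\cdots\equiv\mu_nb_n^*\pmod a$ for case (b). Your observation that ``relatively coprime'' must mean \emph{pairwise} coprime for the CRT step (and for the identity $\mathrm{lcm}(a_1,\ldots,a_n)=\prod_j a_j$) is the same reading the paper implicitly uses.
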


    \begin{proof}Recall that, $b_j^*$ denotes any congruence inverse of $b_j$ modulo $a_j$; that is $b_j b_j^* \equiv 1 (\text{mod }a_j)$, $\mathcal{C}$ is the set of all vectors $\boldsymbol{\mu}$ of the form $(\mu_1,\ldots,\mu_n,-\mu_1,\ldots,-\mu_n)$,
and \[\mathcal{B}=\{(\gamma,\boldsymbol{\mu}) | \gamma+\mu_jb_j^*\equiv 0(\text{mod }a_j) \text{ for } j=1,\ldots,n\text{ and }\boldsymbol{\mu}\in \mathcal{C}  \}.\]  

Suppose $a_1,\ldots, a_n$ are coprime to each other. Then, by using Chinese remainder theorem, there exists a unique solution $\gamma$ modulo $\text{lcm}(a_1,\ldots,a_n)=\prod_{j=1}^na_j$ for each $\boldsymbol{\mu}\in \mathcal{C}$ for the following set of congruence equations
    \[
    \gamma + \mu_j b_j^* \equiv0(\text{mod }{a_j}) \quad \text{for all } j = 1, \ldots, n.
    \]
    Therefore, we will have $|\mathcal{B}|=2^n$ different $\gamma$ values for each $\boldsymbol{\mu}\in \mathcal{C}$.\\

     Next, suppose $a_1=\ldots= a_n=a$. Then, \( \gamma \) for $\boldsymbol{\mu}\in \mathcal{C}$ exists only if the following congruences are satisfied:
    \[
    \mu_1 b_1^* \equiv \ldots \equiv \mu_n b_n^* \pmod{a}.\]

Then, by Theorem \ref{main.A}, $M$ and $D(M)$ satisfy the Conjecture \ref{con:TV volume}.
    
\end{proof}

 \begin{remark}
    An example case where $a_1=\ldots= a_n=a$ is when $b_1=\ldots=b_n=b$. Then, there exist $\gamma$ for $\boldsymbol{\mu}=(1,\ldots,1,-1,\ldots,-1)$ and $\boldsymbol{\mu}=(-1,\ldots,-1,1,\ldots,1)$, thus satisfy Conjecture \ref{con:TV volume}.

    Moreover, for any positive integer $a_j$ for $j=1,\ldots,n$, a solution $\gamma$ modulo $\text{lcm}(a_1,\ldots,a_n)$ exists if and only for every pair $(\mu_sb_s^*, \mu_tb_t^*)$ the congruence equation $\mu_sb_s^*\equiv \mu_tb_t^*(\text{mod gcd}(a_s,a_t))$ is satisfied where $s,t\in\{1,\ldots, n\}$. In particular, if $b_1=\ldots=b_n=b$, we have $\mu_sb^*\equiv \mu_tb^*(\text{mod gcd}(a_s,a_t))$ for $\boldsymbol{\mu}=(1,\ldots,1,-1,\ldots,-1)$ and $\boldsymbol{\mu}=(-1,\ldots,-1,1,\ldots,1)$.
 \end{remark}

Now, we prove Corollary \ref{exam 3}.

\renewcommand{\theoremname}{Corollary \ref{exam 3}}
\begin{corollary*}

 Suppose $M$ and $D(M)$ are 3-manifolds in the statement of Theorem \ref{main.A}. Let $L$ be a link in $M$ or $D(M)$ that has simplicial volume (a.k.a. Gromov norm) zero. Then, Conjecture \ref{con:TV volume} is true for $M\backslash L$.

\end{corollary*}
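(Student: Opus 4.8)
The plan is to read the statement as the assertion that Conjecture~\ref{con:TV volume} holds for the link complement $X$, where $X = M\setminus L$ or $X = D(M)\setminus L$. This $X$ is a compact orientable 3-manifold with toroidal boundary: its boundary consists of the boundary tori of a tubular neighborhood of $L$, together with the already-toroidal boundary of $M$ in the first case. Interpreting the Gromov norm of the link $L$ as $\|X\|$, the hypothesis gives $\|X\| = 0$, so the target reduces to showing $LTV(X) = 0$, which equals $v_3\|X\| = 0$. I would obtain this by squeezing $LTV(X)$ between an upper bound and a lower bound, both equal to $0$.

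For the upper bound, since $X$ has toroidal boundary I would apply Theorem~\ref{2}: there is a universal constant $C>0$ with $LTV(X) \leq C\|X\|$, and because $\|X\| = 0$ this yields $LTV(X) \leq 0$ immediately.

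For the lower bound, I would use that Dehn filling the link tori of $X$ recovers the ambient Seifert manifold. Concretely, $M$ (respectively $D(M)$) is obtained from $X$ by Dehn filling the boundary components coming from $L$, leaving the original toroidal boundary of $M$ unfilled in the bounded case. Corollary~\ref{3} then gives $LTV(M) \leq LTV(X)$ (respectively $LTV(D(M)) \leq LTV(X)$). Since Theorem~\ref{main.A} guarantees $LTV(M) = 0$ (respectively $LTV(D(M)) = 0$), I conclude $LTV(X) \geq 0$. Combining the two bounds gives $LTV(X) = 0 = v_3\|X\|$, which is exactly Conjecture~\ref{con:TV volume} for $L$.

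Once the two input inequalities are granted, the argument is essentially bookkeeping, so I do not expect a serious analytic obstacle. The one point that needs care is the boundary accounting: I must verify that $X$ has purely toroidal boundary so that both Theorem~\ref{2} and Corollary~\ref{3} apply, and I must orient the Dehn filling correctly — $X$ is the manifold with the extra boundary and the Seifert manifold is its filling — so that the monotonicity runs as $LTV(\text{filled}) \leq LTV(\text{unfilled})$, which is the direction producing the lower bound I need.
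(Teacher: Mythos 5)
Your proposal is correct and follows essentially the same route as the paper: the upper bound $LTV(X)\leq 0$ comes from Theorem \ref{2} applied to the zero-Gromov-norm link complement, and the lower bound comes from the Dehn-filling monotonicity of Corollary \ref{3} combined with $LTV(M)=LTV(D(M))=0$ from Theorem \ref{main.A}. Your extra care about the boundary being toroidal and the direction of the filling inequality is sound but matches what the paper implicitly does.
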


\begin{proof}According to Theorem \ref{main.A}, we have \[LTV(D(M))=LTV(M) = 0.\]
Next, by applying Corollary \ref{3} to 3-manifold $M \backslash L$, \[TV_r(M)\leq TV_r(M \backslash  L ),\] and since the Gromov norm $||M\backslash L||=0$, we get the following by using Corollary \ref{2}. \[LTV(M \backslash L) \leq 0.\] 

    Therefore,
    \[0=LTV(M) \leq LTV(M \backslash L) \leq 0.\]

    A similar argument holds for $D(M)$.
\end{proof}

\bibliographystyle{plainnat}  
\bibliography{references}  

\newpage
\hspace{-16pt}Shashini Marasinghe\\
Department of Mathematics, Michigan State University\\
East Lansing, MI, 48824\\
(marasin1@msu.edu)

\end{document}